\newcommand{\RR}{{\mathbb{R}}}
\newcommand{\NN}{{\mathbb{N}}}
\newcommand{\C}{{\mathcal{C}}}
\newcommand{\I}{{\mathcal{I}}}
\newcommand{\J}{{\mathcal{J}}}
\newcommand{\B}{{\mathcal{B}}}
\newcommand{\R}{\mathcal{R}}
\newcommand{\Area}{\operatorname{Area}}
\newtheorem{lemma}{Lemma}
\newtheorem{conjecture}{Conjecture}
\newtheorem{theorem}{Theorem}
\newtheorem{corollary}{Corollary}
\begin{document}
\author{Cristian Cobeli, Yves Gallot, Pieter Moree and Alexandru Zaharescu
}

 \address{Cristian Cobeli,
 Institute of Mathematics  "Simion Stoilow" of the Romanian Academy,
 P. O. Box \mbox{1-764}, Bucharest 70700,
 Romania.}
 \email{cristian.cobeli@imar.ro}

 \address{Yves Gallot,
12 bis rue Perrey, 31400 Toulouse, France.}
 \email{galloty@orange.fr}

 \address{Pieter Moree,
Max-Planck-Institut f\"ur Mathematik, Vivatsgasse 7, D--53111 Bonn, Germany.}
 \email{moree@mpim-bonn.mpg.de}

 \address{Alexandru Zaharescu,
Department of Mathematics, University of Illinois at Urbana-Champaign, 
273 Altgeld Hall,
\mbox{MC-382}, 1409 W. Green Street, Urbana, Illinois 61801, USA 
and
 Institute of Mathematics  "Simion Stoilow" of the Romanian Academy,
 P. O. Box \mbox{1-764}, Bucharest 70700,
 Romania.}
 \email{zaharesc@math.uiuc.edu}

\subjclass[2000]{
Primary 11T22, 
Secondary 11L05 
}
\thanks{Key Words and Phrases: Cyclotomic coefficients, Sister Beiter conjecture, Modular inverses, Kloosterman sums}

\title[A Tale of Cyclotomic Coefficients and
Modular Inverses]{Sister Beiter and Kloosterman:  A Tale of Cyclotomic Coefficients and
Modular Inverses}



\begin{abstract}
For a fixed prime $p$, the maximum coefficient (in absolute value) $M(p)$ of 
the cyclotomic polynomial $\Phi_{pqr}(x)$, where $r$ and $q$ are free primes
satisfying $r>q>p$ exists. Sister Beiter conjectured in 1968 that $M(p)\le(p+1)/2$.
In 2009 Gallot and Moree showed that $M(p)\ge 2p(1-\epsilon)/3$ for every $p$ sufficiently large.
In this article Kloosterman sums (`cloister man sums') and other tools from the
distribution of modular inverses are applied to quantify the abundancy of
counter-examples to Sister Beiter's conjecture and sharpen the above lower bound for $M(p)$.
\end{abstract}
\maketitle

\section{Introduction}
\label{nul}
The $n$-th cyclotomic polynomial $\Phi_n(x)$ is defined by
$$\Phi_n(x)=\prod_{1\le j\le n\atop (j,n)=1}(x-\zeta_n^j)=\sum_{k=0}^{\infty}a_n(k)x^k,$$ with
$\zeta_n$ a $n$-th primitive root of unity (one can take $\zeta_n=e^{2\pi i/n}$).
It has degree $\varphi(n)$, with $\varphi$ Euler's totient function. 
We write 
$A(n)=\max\{|a_n(k)|:k\ge 0\}$, and this quantity is called the height of $\Phi_n(x)$. It is easy
to see that $A(n)=A(N)$, with $N=\prod_{p|n,~p>2}p$ the odd squarefree kernel. In deriving this
one uses the observation that if $n$ is odd, then $A(2n)=A(n)$. If $n$ has at most two distinct odd prime factors, 
then $A(n)=1$. If $A(n)>1$, then we necessarily must have
that $n$ has at least three distinct odd prime factors. Thus for $n<105$ we have $A(n)=1$. It turns
out that $A(3\cdot 5\cdot 7)=2$ with $a_{105}(7)=-2$. Thus the easiest case where we can expect non-trivial
behavior of the coefficients of $\Phi_n(x)$ is the ternary case, where $n=pqr$, with $2<p<q<r$ odd
primes.
It is for this reason that in this paper we will be mainly interested in the behavior of
coefficients of ternary cyclotomic polynomials.\\
\indent If $n$ is a prime, then we have $\Phi_n(x)=1+x+\cdots+x^{n-1}$. Already if $n=pq$ consists of two prime factors and is
odd, modular
inverses come into the picture.
In this binary case the coefficients are computed
in the following lemma. {}For a proof
see e.g. Lam and Leung~\cite{LL} or Thangadurai~\cite{Thanga}.
\begin{lemma}
\label{binary}
Let $p<q$ be odd primes. Let $\rho$ and $\sigma$ be the (unique) non-negative
integers for which $1+pq=\rho p+ \sigma q$.
Let $0\le m<pq$. Then either $m=\alpha_1p+\beta_1q$ or $m=\alpha_1p+\beta_1q-pq$
with $0\le \alpha_1\le q-1$ the unique integer such that $\alpha_1 p\equiv m({\rm mod~}q)$
and $0\le \beta_1\le p-1$ the unique integer such that $\beta_1 q\equiv m({\rm mod~}p)$.
The cyclotomic coefficient $a_{pq}(m)$ equals
$$
\begin{cases}
1 & \mbox{if } m=\alpha_1p+\beta_1q \mbox{ with } 0\le \alpha_1\le \rho-1,~0\le \beta_1\le \sigma-1;\\ 
-1 & \mbox{if } m=\alpha_1p+\beta_1q-pq \mbox{ with } \rho\le \alpha_1\le q-1,~\sigma\le \beta_1\le p-1;\\  
0 & otherwise.
\end{cases}
$$
\end{lemma}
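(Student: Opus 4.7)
The plan is to establish the explicit polynomial identity
\[
\Phi_{pq}(x) \;=\; \sum_{i=0}^{\rho-1}\sum_{j=0}^{\sigma-1} x^{ip+jq} \;-\; \sum_{i=0}^{q-1-\rho}\sum_{j=0}^{p-1-\sigma} x^{\,1+ip+jq},
\]
from which the lemma will follow by reindexing the second sum via $\alpha_1=\rho+i$ and $\beta_1=\sigma+j$, so that $1+ip+jq=\alpha_1 p+\beta_1 q-pq$ with $\alpha_1\in[\rho,q-1]$ and $\beta_1\in[\sigma,p-1]$. Starting from $x^{pq}-1=\prod_{d\mid pq}\Phi_d(x)$, I first reduce this identity to the equivalent polynomial statement that multiplying its right-hand side by $(1-x^p)(1-x^q)$ yields $(1-x)(1-x^{pq})$.

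The key algebraic step is then a direct computation using the telescoping identities $(1-x^p)\sum_{i=0}^{N-1}x^{ip}=1-x^{Np}$ and $(1-x^q)\sum_{j=0}^{M-1}x^{jq}=1-x^{Mq}$. The first double sum contributes $(1-x^{\rho p})(1-x^{\sigma q})=1-x^{\rho p}-x^{\sigma q}+x^{\rho p+\sigma q}$, while the second contributes $x(1-x^{(q-\rho)p})(1-x^{(p-\sigma)q})$. Substituting the hypothesis $\rho p+\sigma q=pq+1$---which also forces $(q-\rho)p=\sigma q-1$ and $(p-\sigma)q=\rho p-1$---makes every intermediate exponent cancel, leaving precisely $1-x-x^{pq}+x^{pq+1}=(1-x)(1-x^{pq})$, as required.

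Once the identity is established in $\ZZ[x]$, one still needs to confirm that each monomial on the right-hand side appears with coefficient $\pm 1$ as claimed. Within either double sum, distinctness of the exponents $ip+jq$ follows from $\gcd(p,q)=1$ together with the range restriction $|i-i'|<q$. The subtler point, which I expect to be the main technical obstacle, is that the two double sums share no common monomial: a coincidence $i_1p+j_1q=1+i_2p+j_2q$ would force $i_1-i_2\equiv\rho\pmod{q}$, and one has to check that with $i_1\in[0,\rho-1]$ and $i_2\in[0,q-1-\rho]$ the difference $i_1-i_2$ lies in $[\rho-q+1,\rho-1]$, a range containing no integer congruent to $\rho$ modulo $q$. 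All exponents on the right-hand side then lie in $[0,pq)$, and together these distinctness and disjointness checks deliver the three-way classification of $a_{pq}(m)$ asserted in the lemma.
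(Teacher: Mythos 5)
Your argument is correct, but note that the paper does not prove Lemma~\ref{binary} at all: it simply cites Lam--Leung~\cite{LL} and Thangadurai~\cite{Thanga}. The identity you establish, $\Phi_{pq}(x)=\bigl(\sum_{i=0}^{\rho-1}x^{ip}\bigr)\bigl(\sum_{j=0}^{\sigma-1}x^{jq}\bigr)-x\bigl(\sum_{i=0}^{q-1-\rho}x^{ip}\bigr)\bigl(\sum_{j=0}^{p-1-\sigma}x^{jq}\bigr)$, together with the telescoping verification after multiplying by $(1-x^p)(1-x^q)$ and the distinctness/disjointness checks, is precisely the standard Lam--Leung proof, so you have in effect reconstructed the cited argument rather than found a new route.
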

Note that $\rho$ is merely the modular inverse of $p$ modulo $q$ and $\sigma$ is the modular inverse
of $q$ modulo $p$. In the ternary case Kaplan's lemma~\cite{Kaplan} can be used to express a ternary cyclotomic coefficient
into a sum of binary ones. 
It is thus not surprising that also in the ternary case modular inverses make their appearance. We will give
some examples of this.

 Let $\overline q$ and $\overline r$, $0<\overline q,\overline r<p$ be the inverses of $q$ and $r$ modulo $p$ respectively.
Set $a=\min(\overline q,\overline r,p-\overline q,p-\overline r)$. 
Put $b=\max(\min(\overline q,p-\overline q),\min(\overline r,p-\overline r))$.  Note that $b\ge a$.
Bzd{\c e}ga~\cite{BZ} showed that
\begin{equation}
\label{barbound}
A(pqr)\le \min(2a+b,p-b).
\end{equation}
It is easy to show from this estimate that $A(pqr)<3p/4$ (see, e.g., Section 3 of Gallot et al.~\cite{GMW}). Notice that
this bound does not depend on the two largest prime factors of $n$. Indeed, for an arbitrary $n$ it was shown 
by Justin~\cite{J} 
and independently by Felsch and Schmidt~\cite{FS}
that there is an upper bound for $A(n)$ that does not depend on the largest and second largest prime factor of $n$.  
Thus for a fixed prime $p$ the maximum 
$$M(p):=\max\{A(pqr):p<q<r\},$$ 
where $q,r$ range over all the primes satisfying $p<q<r$, exists.
The major open problem involving ternary cyclotomic coefficients, is to find a finite procedure to determine $M(p)$.\\
\indent H. M\"oller~\cite{Moeller} gave a construction showing that $M(p)\ge (p+1)/2$ for $p>5$. On the other hand,
in 1968 Sister Marion Beiter~\cite{Beiter-1} had conjectured (a conjecture she repeated
in 1971~\cite{Beiter-2}) that $M(p)\le (p+1)/2$ and shown
that $M(3)=2$~\cite{Beiter-3}, which on combining leads to the conjecture that $M(p)=(p+1)/2$ for $p>2$. The bound
of M\"oller together with $M(5)\le 3$ (established independently by Beiter~\cite{Beiter-2} and
Bloom~\cite{Bloom}) shows that $M(5)=3$.
Zhao and Zhang~\cite{ZZ} showed that $M(7)=4$. Thus Beiter's conjecture holds true for $p\le 7$.
However, work
of Gallot and Moree~\cite{GM} has made clear that the true behavior of $M(p)$
is much more complicated than suggested by Beiter's conjecture.  
Theorem~\ref{main}, the main result of~\cite{GM}, produces counter-examples to Sister Beiter's 
conjecture. The goal of this paper is to investigate the abundance of these counter-examples
using techniques from the study of the distribution of modular inverses (for a survey, see, e.g.,
Shparlinski~\cite{Shparlinski}). These techniques involve Kloosterman sums $K(a,b;p)$. Recall that
for a prime $p$ the Kloosterman sum $K(a,b;p)$ is defined as
$$K(a,b;p)=\sum_{1\le x\le p-1}e^{2\pi i (ax+b{\overline x})/p},$$
where ${\overline x}$ denotes an inverse of $x$ modulo $p$.
By a fundamental result of Weil~\cite{W} we have that 
\begin{equation}
\label{weilie}
|K(a,b;p)|\le 2\sqrt{p}.
\end{equation}
\begin{theorem}
\label{main}
Let $p$ be a prime. Given an $1\le \beta\le p-1$, we let $\overline \beta$ be the unique
integer $1\le \overline \beta\le p-1$ with $\beta \overline \beta\equiv 1({\rm mod~}p)$. \\
\indent Let
${\B}_{-}(p)$ be the set of integers $\beta$ satisfying
\begin{equation}
\label{verg2}
1\le \beta\le {p-3\over 2},~p\le \beta+2\overline \beta+1,~\beta>\overline \beta.
\end{equation}
For every prime $q\equiv \beta({\rm mod~}p)$ with 
$q>q_{-}(p)$ and
$\beta\in {\B}_{-}(p)$, there exists a prime $r_{-}>q$ and 
an integer $n_{-}$ such that $a_{pqr_{-}}(n_{-})=\beta_{-}-p$, where $q_{-}(p),r_{-}$ and $n_{-}$
can be explicitly given.\\
\indent Let
${\B}_+(p)$ be the set of integers $\beta$ satisfying
\begin{equation}
\label{verg1}
1\le \beta\le {p-3\over 2},~\beta+\overline \beta\ge p,~\overline \beta\le 2\beta,
\end{equation}
For every prime
$q\equiv \beta({\rm mod~}p)$ with $q>q_{+}(p)$ and $\beta \in 
{\B}_{+}(p)$ there exists a prime $r_{+}>q$ and 
an integer $n_{+}$ such that $a_{pqr_{+}}(n_{+})=p-\beta$, where $q_{+}(p),r_{+}$ and $n_{+}$
can be explicitly given. In case $\beta\in {\B}_{+}(p)$ and $\beta+\overline \beta=p$,
then $A(pqr_{+})=p-\beta$.
\end{theorem}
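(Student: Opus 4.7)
The strategy is to combine Lemma~\ref{binary} with Kaplan's lemma cited in the introduction. Because each binary coefficient $a_{pq}(\cdot)$ takes only the values $-1,0,1$, Kaplan's lemma will realize $a_{pqr}(n)$ as a signed count over certain index sets, and the task reduces to engineering the residues of $n$ and $r$ modulo $p$ and $q$ so as to force a large number of same-sign contributions that do not cancel. The inequalities \eqref{verg2} and \eqref{verg1} should turn out to be precisely the combinatorial constraints that guarantee such a pile-up of size $p-\beta$.

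First I would fix $q\equiv\beta\pmod p$ so that the modular inverse $\sigma$ appearing in Lemma~\ref{binary} equals $\overline\beta$, and then select $r$ in a carefully prescribed residue class modulo $p$ (and modulo $q$) so that, in the Kaplan expansion of $a_{pqr}(n)$, the pairs $(\alpha_1,\beta_1)$ arising from Lemma~\ref{binary} trace an arithmetic progression of length exactly $p-\beta$ inside one of the two rectangles $[0,\rho-1]\times[0,\sigma-1]$ or $[\rho,q-1]\times[\sigma,p-1]$. Dirichlet's theorem furnishes such a prime $r>q$, and an explicit effective version produces the threshold $q_{\pm}(p)$. With the residues of $r$ pinned down I would take $n_{\pm}$ to be an explicit integer linear combination of $p, q, r$ that steers the Kaplan sum into the chosen rectangle.

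The inequalities \eqref{verg2} for $\B_{-}(p)$ and \eqref{verg1} for $\B_{+}(p)$ should then be verified to do two jobs simultaneously: the bound $\beta\le(p-3)/2$ together with $\beta>\overline\beta$ (respectively $\beta+\overline\beta\ge p$) fixes the geometry of the rectangle so that exactly $p-\beta$ lattice points of one sign can contribute, while the remaining condition $p\le\beta+2\overline\beta+1$ (respectively $\overline\beta\le 2\beta$) annihilates the complementary sum. After cancellation this should yield $a_{pqr_{-}}(n_{-})=\beta-p$ in the first case and $a_{pqr_{+}}(n_{+})=p-\beta$ in the second.

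For the closing sentence, when $\beta+\overline\beta=p$ we have $\overline\beta=p-\beta$, so for any $r$ with $\overline r\equiv\pm\beta\pmod p$ the quantities preceding \eqref{barbound} satisfy $a=b=\beta$. Bzd\c{e}ga's bound then gives $A(pqr_{+})\le\min(3\beta,p-\beta)$, and the side condition $\overline\beta\le 2\beta$ forces $\beta\ge p/3>p/4$, so the minimum equals $p-\beta$, matching the lower bound already produced and giving equality. The main obstacle I anticipate is the combinatorial bookkeeping in the middle step: pinning down the explicit $r_{\pm}$ and $n_{\pm}$ and verifying that the Kaplan sum collapses as claimed, since each of the three inequalities defining $\B_{\pm}(p)$ plays a distinct and delicate role—one delimiting the reinforcing rectangle, another fixing its sign, and the third killing the competing sum.
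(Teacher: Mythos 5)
The first thing to note is that this paper contains no proof of Theorem~\ref{main} to compare against: the theorem is imported from Gallot and Moree \cite{GM}, and the text explicitly defers the construction of $q_{\pm}(p)$, $r_{\pm}$ and $n_{\pm}$ to Theorems 10 and 11 of that paper. Measured against that source, your outline does name the right machinery: Kaplan's lemma to write $a_{pqr}(n)$ as a signed sum of binary coefficients $a_{pq}(\cdot)\in\{-1,0,1\}$, Lemma~\ref{binary} to locate those coefficients via the parameters $\rho$ and $\sigma=\overline\beta$, Dirichlet's theorem to supply a prime $r$ in prescribed residue classes modulo $p$ and $q$, and the inequalities \eqref{verg2}, \eqref{verg1} as the combinatorial conditions forcing a pile-up of size $p-\beta$. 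Your closing argument for $A(pqr_{+})=p-\beta$ when $\beta+\overline\beta=p$ via Bzd{\c e}ga's bound \eqref{barbound} is also internally sound \emph{once one knows} that $\overline{r_{+}}\equiv\pm\beta\ ({\rm mod~}p)$: then $a=b=\beta$, and $\overline\beta=p-\beta\le 2\beta$ gives $\beta\ge p/3>p/4$, so $\min(2a+b,p-b)=p-\beta$, matching the lower bound. (This is a legitimate shortcut even though \cite{GM} predates Bzd{\c e}ga's inequality and argues differently.)

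The genuine gap is that everything between the strategy and the conclusion is missing, and that is where the entire content of the theorem lives. You do not specify the residue classes of $r_{\pm}$ modulo $p$ and $q$, you do not exhibit $n_{\pm}$, and you do not verify that the Kaplan sum actually collapses to $\pm(p-\beta)$ rather than to something smaller; you flag this bookkeeping yourself as ``the main obstacle.'' In particular, nothing in the outline derives why the surviving count equals $p-\beta$ specifically, nor how the condition $p\le\beta+2\overline\beta+1$ versus $\overline\beta\le 2\beta$ produces the opposite signs $\beta-p$ and $p-\beta$ in the two cases; these are asserted as what ``should turn out,'' not proved. Moreover, the hypothesis $\overline{r_{+}}\equiv\pm\beta\ ({\rm mod~}p)$ on which your final paragraph rests is itself an unverified property of the construction you have not carried out. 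As it stands the proposal is a plausible road map coinciding with the route taken in \cite{GM}, but it is not a proof.
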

\begin{corollary}
\label{coo}
Put ${\B}(p)={\B}_{-}(p)\cup {\B}_{+}(p)$. If
${\B}(p)$ is non-empty, then
$$M(p)\ge p-{\min}\{{\B}(p)\}>{p+1\over 2},$$
an so Beiter's conjecture is false for the prime $p$.
\end{corollary}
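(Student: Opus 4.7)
The plan is to deduce the corollary directly from Theorem~\ref{main}: each element of $\B(p)$ forces a ternary cyclotomic coefficient of absolute value $p-\beta$, and the constraint $\beta\le(p-3)/2$ built into the definitions of $\B_{\pm}(p)$ immediately gives the strict improvement over Beiter's bound.

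First, I fix any $\beta\in\B(p)$. Because $\beta\in\{1,\dots,p-1\}$ admits a modular inverse $\overline\beta$ modulo $p$, we have $\gcd(\beta,p)=1$, so Dirichlet's theorem on primes in arithmetic progressions produces a prime $q\equiv\beta\pmod p$ exceeding the explicit threshold $q_{\pm}(p)$ supplied by Theorem~\ref{main}. Depending on whether $\beta\in\B_{-}(p)$ or $\beta\in\B_{+}(p)$, Theorem~\ref{main} then furnishes a prime $r_{\pm}>q$ and an index $n_{\pm}$ with
$$|a_{pqr_{\pm}}(n_{\pm})|=p-\beta.$$
By the definition of the height, $A(pqr_{\pm})\ge p-\beta$, and since the triple $(p,q,r_{\pm})$ is admissible in the definition of $M(p)$, we conclude $M(p)\ge p-\beta$. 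Because this holds for every $\beta\in\B(p)$ and $\B(p)\subseteq\{1,\dots,p-1\}$ is finite, I take $\beta=\min\{\B(p)\}$ to obtain $M(p)\ge p-\min\{\B(p)\}$.

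Second, I would verify the strict inequality. Both defining conditions \eqref{verg2} and \eqref{verg1} force $1\le\beta\le (p-3)/2$, hence $\min\{\B(p)\}\le(p-3)/2$ and consequently
$$p-\min\{\B(p)\}\ \ge\ p-\frac{p-3}{2}\ =\ \frac{p+3}{2}\ >\ \frac{p+1}{2}.$$
Since Sister Beiter's conjecture asserts $M(p)\le(p+1)/2$, the strict inequality $M(p)>(p+1)/2$ is precisely a counter-example for the prime $p$.

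I do not expect any genuine obstacle: the corollary is a repackaging of Theorem~\ref{main}, whose ternary coefficient construction does all the real work. The only external ingredient invoked above is Dirichlet's theorem, used to secure a prime $q\equiv\beta\pmod p$ beyond the explicit threshold $q_{\pm}(p)$; this is standard and does not interact with the Kloosterman-sum machinery used elsewhere in the paper. One could alternatively phrase the corollary as "for every sufficiently large prime $q\equiv\beta\pmod p$ with $\beta\in\B(p)$", thereby bypassing Dirichlet altogether, but the stated form is cleaner for the purpose of refuting the conjecture.
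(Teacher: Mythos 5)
Your proof is correct and follows exactly the route the paper intends: the corollary is stated without proof as an immediate consequence of Theorem~\ref{main}, and your argument (Dirichlet to produce a suitable $q\equiv\beta\pmod p$, the theorem to force $|a_{pqr_{\pm}}(n_{\pm})|=p-\beta$, then minimizing over $\beta$ and using $\beta\le(p-3)/2$) is the standard unpacking of that implication.
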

The explicit values of $q_{-}(p),r_{-},n_{-},q_{+}(p),r_{+}$ and $n_{+}$ will
be of no concern to us here. For these the reader is referred to
Theorems 10 and 11 in~\cite{GM}.\\
\indent We like to remark that the sets $\B_{\rm \pm}(p)$ are not merely `figments of the proof
of Theorem~\ref{main}'.  Similar (but not equal) sets were independently found by E. Ro\c su in her construction of
`Non-Beiter ternary cyclotomic polynomials with an optimally large set of coefficients', see~\cite{MR}.\\
\indent To fully exploit the power of Theorem~\ref{main}, one needs information
on the sets ${\B}_{-}(p), {\B}_{+}(p)$ and ${\B}(p)$. By an 
elementary method in~\cite{GM} the following information on ${\B}(p)$ was
deduced, which in combination with Theorem~\ref{main} shows that Beiter's
conjecture is false for every $p\ge 11$.
\begin{lemma}
\label{uno}
For $p\ge 11$, ${\B}(p)$ is non-empty and
$\max\{{\B}(p)\}={(p-3)/2}$.
\end{lemma}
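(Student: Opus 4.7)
The plan is to notice that the very definitions of $\B_{-}(p)$ and $\B_{+}(p)$ both restrict $\beta$ to the interval $1\le \beta\le (p-3)/2$, so $\max\{\B(p)\}\le (p-3)/2$ is automatic. Consequently, both assertions of the lemma (non-emptiness and the value of the maximum) will follow if, for every prime $p\ge 11$, the endpoint $\beta=(p-3)/2$ is shown to lie in $\B(p)$. I would start by computing $\overline\beta$ for this specific $\beta$: from $(p-3)\overline\beta\equiv 2\pmod p$ one gets $3\overline\beta\equiv p-2\pmod p$, and since $p\ge 11$ rules out $p\equiv 0\pmod 3$, splitting on the residue of $p$ modulo $3$ yields the closed form
$$
\overline\beta=\begin{cases} (2p-2)/3 & \text{if } p\equiv 1\pmod 3, \\ (p-2)/3 & \text{if } p\equiv 2\pmod 3. \end{cases}
$$

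Next I would verify, in each residue class, that the three defining inequalities of the appropriate set $\B_{\pm}(p)$ hold. In the case $p\equiv 2\pmod 3$ I would place $\beta=(p-3)/2$ in $\B_{-}(p)$: the inequality $\beta>\overline\beta$ reduces to $p>5$, and the inequality $\beta+2\overline\beta+1\ge p$ becomes $(7p-11)/6\ge p$, i.e.\ $p\ge 11$. In the case $p\equiv 1\pmod 3$ I would place $\beta=(p-3)/2$ in $\B_{+}(p)$: the inequality $\overline\beta\le 2\beta$ becomes $p\ge 7$, while $\beta+\overline\beta\ge p$ becomes $(7p-13)/6\ge p$, i.e.\ $p\ge 13$. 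Since the smallest prime $p\ge 11$ with $p\equiv 1\pmod 3$ is $13$, the two cases together cover every prime $p\ge 11$, and the lemma follows.

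There is no genuine obstacle in this argument; it amounts to routine elementary casework with three linear inequalities. The only subtlety worth flagging is to remember that the ceiling $(p-3)/2$ is already baked into the definitions of $\B_{\pm}(p)$, so a single witness suffices and there is no need to argue non-existence of larger elements separately. Boundary checks for the smallest primes $p=11$ and $p=13$ show that the bounds $p\ge 11$ (in the $p\equiv 2\pmod 3$ case) and $p\ge 13$ (in the $p\equiv 1\pmod 3$ case) are sharp, confirming the range $p\ge 11$ stated in the lemma.
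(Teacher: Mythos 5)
Your proof is correct and follows exactly the paper's argument: take the witness $\beta=(p-3)/2$, compute $\overline\beta=2(p-1)/3$ or $(p-2)/3$ according to $p\bmod 3$, and check that $\beta$ lies in $\B_{+}(p)$ or $\B_{-}(p)$ respectively. The only difference is that you carry out explicitly the verification the paper leaves as ``one checks,'' and your inequality reductions are all accurate.
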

\begin{proof}
Consider $\beta=(p-3)/2$. If $p\equiv 1({\rm mod~}3)$, then
$\overline \beta=2(p-1)/3$ and one checks that $\beta\in {\B}_{+}(p)$. 
If $p\equiv 2({\rm mod~}3)$, then
$\overline \beta=(p-2)/3$ and one checks that $\beta\in {\B}_{-}(p)$.
\end{proof}

Showing the non-emptiness of $\B(p)$ for $p\ge 11$ is thus almost trivial. 
Estimating its cardinality is rather more challenging and this is were the Kloosterman sums come in.
\begin{theorem}\label{Theorem1}
For any prime number $p$,
\begin{align}
\bigg|\#\B_{-}(p)-\frac{p}{48}\bigg|&\le 12\;p^{3/4}\log p\,,\notag \\
\bigg|\#\B_{+}(p)-\frac{p}{24}\bigg|&\le 12\;p^{3/4}\log p\,,\\ 
\bigg|\#\B(p)-\frac{p}{16}\bigg|&\le 24\;p^{3/4}\log p\, \notag.
\end{align}
\end{theorem}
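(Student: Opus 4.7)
The plan is to view $\#\B_{\pm}(p)$ as the number of modular inverse pairs $(\beta, \overline\beta)$ falling in a prescribed planar region, and then to estimate each count by completing sums and invoking Weil's bound~\eqref{weilie}.

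First I would rewrite the defining inequalities geometrically. The three conditions for $\B_+(p)$ -- namely $1 \le \beta \le (p-3)/2$, $\overline\beta \ge p - \beta$, and $\overline\beta \le 2\beta$ -- cut out a triangle $R_+ \subseteq [1, p-1]^2$ whose real area is $p^2/24 + O(p)$, as follows from an elementary integration over $\beta \in [p/3, p/2]$. Similarly $\B_-(p)$ corresponds to a triangle $R_-$ of area $p^2/48 + O(p)$. Crucially, $R_-$ and $R_+$ are disjoint: membership in $\B_-(p)$ forces $\beta > \overline\beta$ together with $\beta \le (p-3)/2$, hence $\beta + \overline\beta \le p-2$, whereas $\B_+(p)$ demands $\beta + \overline\beta \ge p$. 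Consequently $\#\B(p) = \#\B_-(p) + \#\B_+(p)$, and the third estimate in the theorem will reduce to the first two by the triangle inequality.

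The core step is a counting lemma for axis-aligned boxes: for any $B = [a_1, b_1] \times [a_2, b_2] \subseteq [1, p-1]^2$,
$$
N(B) := \#\{x : (x, \overline x) \in B\} = \frac{(b_1 - a_1 + 1)(b_2 - a_2 + 1)}{p} + O(\sqrt p \log^2 p).
$$
To prove it I would insert the Fourier expansion of each characteristic function $\mathbf 1_{[a_i, b_i]}$ modulo $p$ and multiply: the zero-zero frequency gives the main term; pairs $(s, t)$ with both nonzero produce Kloosterman sums $K(s, t; p)$ bounded by $2\sqrt p$ via~\eqref{weilie}; and mixed pairs collapse to complete geometric sums equal to $-1$. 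Since $\sum_{t \ne 0}|\widehat{\mathbf 1}_{[a,b]}(t)| \ll \log p$ for the Fourier $L^1$-norm of an interval indicator, each of the two expansions contributes a factor $\log p$, producing the $\sqrt p \log^2 p$ error stated.

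Next I would approximate each $R_\pm$ by unions of boxes. Partitioning into $K$ horizontal strips of equal width and approximating each slanted edge by a vertical step within each strip yields an inscribed and a circumscribed union of $K$ rectangles that sandwich $R_\pm$ and whose symmetric difference has area $O(p^2/K)$, since the slopes of the edges of $R_\pm$ are bounded by absolute constants. Applying the counting lemma to each of the $O(K)$ boxes and using monotonicity of the count,
$$
\Bigl| \#\B_\pm(p) - \operatorname{Area}(R_\pm)/p \Bigr| \ll \frac{p}{K} + K \sqrt p \log^2 p.
$$
The choice $K \asymp p^{1/4}/\log p$ balances the two sources of error and yields a total error of $O(p^{3/4} \log p)$, in agreement with the order claimed.

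The conceptual steps above are standard in the distribution of modular inverses; the main obstacle lies in turning each $O(\cdot)$ into the explicit constants $12$, $24$, $24$ of the theorem. This means substituting the sharpest available inequalities throughout -- the exact constant $2\sqrt p$ of Weil, a quantitative bound such as $\sum_{t \ne 0}|\widehat{\mathbf 1}_{[a,b]}(t)| \le \tfrac{2}{\pi}\log p + O(1)$ for the Fourier $L^1$-norm, and a concrete integer choice of $K$ close to $p^{1/4}/\log p$ -- and then combining them patiently. No deeper idea beyond these ingredients appears to be required.
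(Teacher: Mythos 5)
Your plan is essentially the paper's proof: the paper identifies $\B_{\pm}(p)$ with the sets of points $(x,\overline{x})$ in two disjoint triangles of areas $p^2/48$ and $p^2/24$, proves the rectangle count with error $\sqrt{p}(\log p+1.1)^2$ by completing the Kloosterman sum and applying Weil (its Lemma~\ref{Lemma1}), and then covers the triangles by $\asymp p^{1/4}/\log p$ rectangles (dyadic rows rather than your equal-width strips, a purely cosmetic difference) to balance the errors at $p^{3/4}\log p$. The explicit constants you defer (which are $12$, $12$, $24$, not $12$, $24$, $24$) are exactly what the paper's Lemma~\ref{Lemma3} supplies, via the chain rectangle $\to$ right triangle (constant $3$) $\to$ arbitrary triangle (constant $12$).
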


\bigskip
\indent It was shown~\cite[Proposition 4]{GM}, working with explicit inverses modulo
$p$, that if $e\ge 1$ and $p$ are such (with $N=2^{2e+1}$) that if
$$\epsilon>0,~N>{1\over 3\epsilon}+3,~p>{N^2\over 2}-9{\rm ~and~}p\equiv N-9({\rm mod~}3N),$$
then min$\{{\B}_{+}(p)\}<{p\over 3}(1+\epsilon)$ and 
hence $M(p)>({2\over 3}-\epsilon)p$. An easy application of Lemma~\ref{Lemma1}  below (see the proof of Theorem 6 of~\cite{GM})
yields the stronger result that
\begin{equation}\label{e2}
\frac23p(1-\epsilon)\le M(p),
\end{equation}
for {\em every} prime $p$ large enough.\\
\indent If there are $p$ with $M(p)>2p/3$, then Theorem~\ref{main} does not allow to find them, since $\min\{\B(p)\}\ge p/3$.
On this basis
and extensive numerical experiments by Gallot, the following 
Corrected Beiter Conjecture (Conjecture 3 from~\cite{GM}) can be made:
\begin{equation}\label{e1}
M(p) \le \frac{2p}3.
\end{equation}
If true, this conjecture would place $M(p)$ in a rather short interval of size $\epsilon p$. \\
\indent A natural question that arises would be to see
how much one can shorten this interval by improving on the lower bound in \eqref{e2}.
We will establish the following result.
\begin{theorem}$~$\\ \label{driedrie}
{\rm 1)} We have
\begin{equation}\label{e3}
M(p) >  \frac{2p}3 - 3\, p^{3/4}\log p.
\end{equation}
{\rm 2)} For an infinite class of prime numbers $p$ we have
\begin{equation}\label{e4}
M(p) >  \frac{2p}3 - c_1 \sqrt{p},
\end{equation}
with $c_1$ a positive constant.
\end{theorem}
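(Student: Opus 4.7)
\emph{Part 1.} By Corollary \ref{coo}, it suffices to exhibit some $\beta \in \B(p)$ with $\beta \le p/3 + 3p^{3/4}\log p$. I would work inside $\B_{+}(p)$ and look for $\beta$ whose inverse $\overline\beta$ is automatically constrained, by examining the rectangle
$$\bigl[p/3 + L,\, p/3 + 2L\bigr] \times \bigl[2p/3,\, 2p/3 + 2L\bigr]$$
in $(\beta, \overline\beta)$-coordinates, where $L$ is a suitable multiple of $p^{3/4}\log p$. Every integer point in this rectangle automatically satisfies the three defining conditions of $\B_{+}(p)$: the bound $\beta \le (p-3)/2$ is immediate for large $p$, one has $\beta + \overline\beta \ge p + L > p$, and $\overline\beta \le 2p/3 + 2L \le 2\beta$. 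Counting such $\beta$ via Lemma \ref{Lemma1}, the Kloosterman-based counting tool that powers Theorem \ref{Theorem1}, produces a main term $2L^2/p$ against an error $O(\sqrt{p}\,\log^2 p)$ arising from Weil's bound \eqref{weilie}. Choosing the constant in $L$ so that $2L^2/p$ strictly exceeds this error, the count is positive and we recover a $\beta \in \B_{+}(p)$ of size at most $p/3 + 2L$; tracking the numerical constant gives \eqref{e3}.

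\emph{Part 2.} For the improvement from $p^{3/4}\log p$ to $\sqrt{p}$ on an infinite class, I would bypass Kloosterman sums entirely and instead invoke the explicit construction of \cite[Prop.~4]{GM} recalled just above the theorem: whenever $N = 2^{2e+1}$, $\epsilon > 1/(3(N-3))$ and $p$ is a prime with $p > N^2/2 - 9$ and $p \equiv N - 9 \pmod{3N}$, one has $\min\{\B_{+}(p)\} < (p/3)(1+\epsilon)$. For each large $e$ I would set $N_e = 2^{2e+1}$ and, via Dirichlet's theorem on primes in arithmetic progressions, select a prime $p_e$ in the progression $N_e - 9 \pmod{3N_e}$ lying in the window $N_e^2/2 - 9 < p_e \le C_0 N_e^2$ for some absolute $C_0$; then $N_e = \Theta(\sqrt{p_e})$. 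Applying the cited result with $\epsilon = 2/(3(N_e-3)) = O(1/\sqrt{p_e})$ gives $\min\{\B_{+}(p_e)\} < p_e/3 + (c_1/2)\sqrt{p_e}$, and Corollary \ref{coo} converts this into $M(p_e) > 2p_e/3 - c_1\sqrt{p_e}$. The primes $p_e$ are distinct and tend to infinity with $e$, producing the required infinite class.

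\emph{Main obstacle.} The substantive point in Part 1 is \emph{aligning} the rectangle so that $\B_{+}$-membership is automatic for every integer pair $(\beta, \overline\beta)$ inside it: if one had to intersect with an additional inequality or congruence, the main term would drop below the Weil error and the argument would collapse. In Part 2 the only technicality is producing a prime of the prescribed shape within a window of length $\asymp N_e^2$, which Dirichlet (or Brun--Titchmarsh, if quantitative density is wanted) handles comfortably.
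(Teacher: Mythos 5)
Your Part~1 is sound and is essentially the paper's own argument: the paper obtains \eqref{e3} from Theorem~\ref{Theorem2} (namely $m_{\pm}(p)\le p/3+3\,p^{3/4}\log p$, proved by placing a ``capturing'' rectangle of area at least $p^{3/2}(\log p+1.1)^2$ just to the right of the left vertex of $\B_{+}^{\times}(p)$) together with the inequality $M(p)\ge p-m_{\pm}(p)$ coming from Corollary~\ref{coo}. Your explicit box $[p/3+L,\,p/3+2L]\times[2p/3,\,2p/3+2L]$ is a concrete instance of the same capturing-box construction; it does lie inside the triangle $\B_{+}^{\times}(p)$, so Lemma~\ref{Lemma1} applies and the constant works out for large $p$, the small primes being covered trivially since the right-hand side of \eqref{e3} is then negative.

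Part~2, however, has a genuine gap at the step ``via Dirichlet's theorem \dots\ select a prime $p_e\equiv N_e-9\pmod{3N_e}$ with $N_e^2/2-9<p_e\le C_0N_e^2$.'' Dirichlet's theorem guarantees infinitely many primes in that progression but gives no control on where the first one occurs; what you need is that the least prime in a progression modulo $k=3N_e$ is $O(k^2)$, i.e.\ Linnik's theorem with exponent $2$, which is open (the best unconditional Linnik exponent is about $5$, and even GRH gives only $k^{2+\epsilon}$, which after $N_e\asymp p_e^{1/(2+\epsilon)}$ turns $\epsilon\,p$ into $p^{(1+\epsilon)/(2+\epsilon)}\gg\sqrt{p}$ and destroys \eqref{e4}). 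Brun--Titchmarsh, which you offer as a fallback, is an \emph{upper} bound for $\pi(x;q,a)$ and gives nothing here. This obstruction is exactly what Section~3 of the paper is built to circumvent: instead of fixing the modulus and asking for one prime, the paper fixes the residue $-9$ and averages over all prime moduli $q\in(c_4\sqrt X,c_5\sqrt X)$ with $q\equiv-1\pmod 3$, where the Bombieri--Friedlander--Iwaniec theorem (Theorem~\ref{ThBFI}) controls $\sum_q\big|\pi(x;q,-9)-\pi(x)/\varphi(q)\big|$ even at moduli of size $\sqrt x$. This produces $\gg X\log^{-2}X$ primes $p\in(X,c_3X)$ with $p\equiv-9\pmod q$ for some such $q$; writing $p+9=qm$ then yields an explicit point of $\B^{\times}(p)$ at distance $O(\sqrt p)$ from the left vertex, giving Theorem~\ref{Theorem4} and hence \eqref{e4}. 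So your underlying idea --- exploit a factorization of $p+9$ with a factor of size $\asymp\sqrt p$ --- is the right one, but the individual-modulus existence claim must be replaced by an average over many moduli.
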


Given fixed primes $2<p<q$, put 
$$M(p;q):=\max\{A(pqr):p<q<r\},$$ 
where $r$ ranges over all the primes $>q$. There is a finite procedure to determine
$M(p;q)$. We say that a function is ultimately constant on an infinite sequences of integers, if it
takes on the same value for all sufficiently large elements in the sequence. The study of
$M(p;q)$ was initiated by Gallot et al.~\cite{GMW}. The main conjecture is that given
a prime $p$, there exists a modulus $d_p$, such that $M(p;q)$ is ultimately constant
on every primitive residue class modulo $d_p$. This would imply that
$$\delta_p=\lim_{x\rightarrow \infty}{\#\{p<q\le x: M(p;q)> (p+1)/2\}\over \pi(x)},$$
exists and is rational (by the prime number theorem for arithmetic progressions). 
Here as usual $\pi(x)$ denotes the number of primes $p\le x$,
Put
$${\underline \delta}_p=\lim_{x\rightarrow \infty}\inf{\#\{p<q\le x: M(p;q)> (p+1)/2\}\over \pi(x)}.$$
By Theorem~\ref{main}, Lemma~\ref{uno} and the prime number theorem for arithmetic progressions
${\underline \delta}_p$ is positive for $p\ge 11$. We will establish the following result, which in conjunction with
Lemma~\ref{uno} implies that there is a positive
constant $c_2$ such that ${\underline \delta}_p\ge c_2$ for every prime $p\ge 11$. 
\begin{theorem}
We have
$${\underline \delta}_p\ge {\# \B(p)\over p-1}{\rm ~and~}\lim \inf_{p\rightarrow \infty}{\underline \delta}_p\ge {1\over 16}.$$
\end{theorem}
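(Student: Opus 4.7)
The plan is a two-step reduction. First, combine Theorem~\ref{main} with the prime number theorem for arithmetic progressions (PNT-AP) to establish $\underline{\delta}_p \ge \#\B(p)/(p-1)$. Then feed the Kloosterman-driven lower bound on $\#\B(p)$ from Theorem~\ref{Theorem1} into this estimate to extract the asymptotic constant $1/16$.

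For the first inequality, fix a residue $\beta \in \B(p) = \B_{-}(p) \cup \B_{+}(p)$ and consider any prime $q \equiv \beta \pmod{p}$ satisfying $q > \max(q_{-}(p), q_{+}(p))$. Theorem~\ref{main} then produces a prime $r > q$ and an integer $n$ with $|a_{pqr}(n)| = p - \beta$; since every element of $\B(p)$ satisfies $\beta \le (p-3)/2$, this gives
$$M(p;q) \ge A(pqr) \ge p - \beta \ge \frac{p+3}{2} > \frac{p+1}{2}.$$
Hence, up to finitely many exceptional small $q$, every prime $q$ lying in one of the $\#\B(p)$ residue classes singled out by $\B(p)$ is counted in the numerator of the ratio defining $\underline{\delta}_p$. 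Summing the PNT-AP estimate $\pi(x;p,\beta) = \pi(x)/(p-1) + o_p(\pi(x))$ over these $\#\B(p)$ classes (all coprime to $p$, since $1 \le \beta < p$), dividing by $\pi(x)$, and passing to $\liminf_{x\to\infty}$ yields $\underline{\delta}_p \ge \#\B(p)/(p-1)$.

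For the second inequality, I would substitute the bound $\#\B(p) \ge p/16 - 24\, p^{3/4}\log p$ supplied by Theorem~\ref{Theorem1} into the first inequality to get
$$\underline{\delta}_p \ge \frac{p/16 - 24\, p^{3/4}\log p}{p-1} \longrightarrow \frac{1}{16} \qquad (p \to \infty),$$
which immediately gives $\liminf_{p\to\infty}\underline{\delta}_p \ge 1/16$.

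The argument is essentially an assembly of two powerful results, so there is no serious obstacle; the only things needing attention are two pieces of routine bookkeeping. First, for fixed $p$ the $O(1)$ loss from primes $q \le \max(q_{\pm}(p))$ is harmless because it is absorbed by dividing through by $\pi(x) \to \infty$. Second, in the $p \to \infty$ step one uses $p^{3/4}\log p = o(p)$ to conclude that the Kloosterman error term is negligible against the main term $p/16$. The true content of the statement therefore sits in Theorems~\ref{main} and~\ref{Theorem1}, which do all the heavy lifting.
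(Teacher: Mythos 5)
Your proposal is correct and follows exactly the route the paper takes: its one-line proof cites precisely the prime number theorem for arithmetic progressions together with Theorem~\ref{main} for the first inequality, and Theorem~\ref{Theorem1} for the second, and your write-up just supplies the routine details (the bound $p-\beta\ge (p+3)/2$ from $\beta\le (p-3)/2$, the finitely many excluded small $q$, and $p^{3/4}\log p = o(p)$). No discrepancies to report.
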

\begin{proof} 
The first inequality is a consequence of the prime number theorem for arithmetic progressions and
Theorem~\ref{main}. The second inequality follows from the first one and Theorem~\ref{Theorem1}.
\end{proof}

We conjecture that $\delta_p$ exists. It is known that $\delta_3=\delta_5=\delta_7=0$ and ${\underline \delta}_{11}\ge {2\over 5}$.
We conjecture that $\delta_{11}={2\over 5}$, $\delta_{13}={1\over 3}$, $\delta_{17}={3\over 8}$, 
$\delta_{19}={4\over 9}$, $\delta_{23}={5\over 11}$ (cf.~\cite{GMW}).

\section{Kloosterman sums and their application to cyclotomic coefficients}
\label{een}
Let $p$ be a prime and, for any $\Omega\subset\RR^2$, let
\begin{equation*}
\I(\Omega):=\# \big\{
	(x,y)\in\Omega\cap \NN\times\NN:\ xy\equiv 1({\rm mod~}p)
\big\}\,.
\end{equation*}

A familiar argument using the Weil bound (\ref{weilie}) provides us with a sharp estimate 
for $\I(\Omega)$ when $\Omega$ is a rectangle:
\begin{lemma}\label{Lemma1}
For any $0\le a<b<p$ and $0\le c<d<p$, let $\R:=[a,b)\times[c,d)$ or $\R:=(a,b]\times(c,d]$. 
Then we have:
\begin{equation*}
\bigg|\I\big(\R\big)-\frac{\Area(\R)}{p} \bigg|
< \sqrt p \, \big(\log p +1.1 \big)^2.
\end{equation*}
\end{lemma}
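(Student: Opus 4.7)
The plan is to apply the standard Fourier-analytic method on $\ZZ/p\ZZ$: express the congruence $xy\equiv 1\,(\mathrm{mod}\,p)$ via additive characters so that $\I(\R)$ becomes a sum of Kloosterman sums weighted by the Fourier coefficients of the two interval indicators, and then invoke the Weil bound (\ref{weilie}). Specifically, I would write
$$\mathbf{1}_{[a,b)}(x)=\frac{1}{p}\sum_{m=0}^{p-1}c_m\,e^{2\pi i mx/p},\qquad c_m=\sum_{k=a}^{b-1}e^{-2\pi i mk/p},$$
and analogously $\mathbf{1}_{[c,d)}(y)=\frac{1}{p}\sum_{n=0}^{p-1}d_n\,e^{2\pi i ny/p}$, so that $c_0=b-a$, $d_0=d-c$, and $|c_m|\le 1/|\sin(\pi m/p)|$ for $m\ne 0$ by the geometric-sum identity.

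Substituting $y=\overline{x}$ (the inverse of $x$ modulo $p$) into the definition of $\I(\R)$ and interchanging the order of summation yields
$$\I(\R)=\frac{1}{p^2}\sum_{m=0}^{p-1}\sum_{n=0}^{p-1}c_m\,d_n\,K(m,n;p),$$
where $K(0,0;p)=p-1$, $K(m,0;p)=K(0,n;p)=-1$ for nonzero $m,n$, and $|K(m,n;p)|\le 2\sqrt p$ otherwise. I would isolate the $(0,0)$ term, which contributes $(p-1)\,\Area(\R)/p^2$ and differs from $\Area(\R)/p$ by less than $1$, safely absorbed into the error. The two families of boundary terms with exactly one of $m,n$ equal to zero contribute at most $(c_0+d_0)/p^2\cdot O(p\log p)=O(\log p)$, also negligible.

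The main work is in the remaining double sum, which by Weil is bounded by
$$\frac{2\sqrt p}{p^2}\Bigl(\sum_{m=1}^{p-1}|c_m|\Bigr)\Bigl(\sum_{n=1}^{p-1}|d_n|\Bigr).$$
Here I would apply the classical estimate $\sum_{m=1}^{p-1}|c_m|\le \sum_{m=1}^{p-1}1/|\sin(\pi m/p)|\le(2/\pi)p\log p+O(p)$ to each factor; since $2\sqrt 2/\pi<1$, after careful numerical bookkeeping this yields a bound of the required form $\sqrt p(\log p+1.1)^2$. The case of half-open intervals $(a,b]\times(c,d]$ is handled identically by translation. The main obstacle is not the structural argument — which is entirely standard — but extracting the explicit constant $1.1$: this requires comparing $\sum_{m}\csc(\pi m/p)$ to $\int\csc(\pi t)\,dt$ sharply and handling the low-frequency range where $m/p$ is close to $0$ or $1$ with care, rather than settling for a mere $O(\sqrt p(\log p)^2)$ bound.
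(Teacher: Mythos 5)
Your proposal is correct and is essentially the method of the paper: the paper detects the congruence $y\equiv\overline{x}\ ({\rm mod~}p)$ with one additive character sum and then applies a ``standard completion'' to the resulting incomplete Kloosterman sum over $x\in(a,b]$, and that completion step is exactly your second Fourier expansion, so your symmetric double sum $\frac{1}{p^2}\sum_{m,n}c_m d_n K(m,n;p)$ is the same computation with the summations unfolded in a different order. In both versions the error is ultimately (Weil bound) $\times$ (two cosecant sums)$/p^{3/2}$, and your observation that the sharp constant $2/\pi$ in $\sum_m|c_m|$ is what makes the leading coefficient $8/\pi^2<1$ fit under $(\log p+1.1)^2$ correctly identifies the only delicate point.
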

\begin{proof}
We adapt the calculations from~\cite[Section 3.2, Lemma 4]{TezaC}.

Writing the characteristic function of the points counted by $\I(\R)$ 
in terms of exponential sums, we have:
\begin{equation}\label{NIIMOM}
\I(\R)=\frac{1}{p}\sum_{\substack{x\in(a,b]\\p\nmid x}}
\sum_{y\in(c,d]}
\sum_{k=1}^{p}
e\Big(k\frac{y-\overline{x}}{p}\Big).
\end{equation}
The main contribution is given by the terms with $k=p.$
This is equal to
\begin{equation}\label{mtmoment}
\frac{\big((b-a)+\delta_1\big)\times \big((d-c)+\delta_1\big)}{p}
=
\frac{\Area(\R)}{p}+\eta,
\end{equation}
where $|\delta_1|\le 1$, $|\delta_2|\le 1$, which implies
$|\eta|\leq 3$  for any prime $p \ge 2.$
Changing the order of summation of the remaining terms, we have
\begin{equation}\label{stea}
\frac{1}{p}\sum_{\substack{x\in(a,b]\\p\nmid x}}
\sum_{y\in(c,d]}\sum_{k=1}^{p-1}
e\Big(k\frac{y-\overline{x}}{p}\Big)=
\frac{1}{p}\sum_{k=1}^{p-1}\sum_{y\in(c,d]}
e\bigg(\frac{ky}{p}\bigg)
\sum_{\substack{x\in(a,b]\\p\nmid x}}
e\bigg(\frac{-k\overline{x}}{p}\Big)\,.
\end{equation}
The most inner sum on the right-hand side of \eqref{stea} is an 
incomplete Kloosterman sum. Using a standard completion together with the upper bound 
(\ref{weilie}), yields
\begin{equation}\label{KloostermanIncomplete}
\Bigg|\sum_{\substack{x\in(a,b]\\p\nmid x}}
e\bigg(\frac{-k\overline{x}}{p}\Big)\Bigg|
\le (2+\log p)\sqrt{p}.
\end{equation}
On combining  this with \eqref{NIIMOM}, \eqref{mtmoment}, and 
\eqref{stea}, we obtain:

\begin{equation*}
\begin{split}
\bigg|\I\big(\R\big)-\frac{\Area(\R)}{p} \bigg|
\leq&
\frac{1}{p}\sum_{k=1}^{p-1}\frac{2}{\big| e\big( \frac{k}{p}\big)-1\big|}
\times (2+\log p)\sqrt{p}+3\\
\leq&\frac{2+\log p}{\sqrt{p}} 
\sum_{k=1}^{p-1}\frac{1}{\sin\frac{k\pi }{p}}+3\\
\leq & \frac{2+\log p}{\sqrt{p}}\sum_{k=1}^{\frac{p-1}{2}}\frac{p}{k}+3 
\leq(1.1+\log p)^2\sqrt{p}.
\end{split}
\end{equation*}
This completes the proof of the lemma.
\end{proof}

For a region $\Omega$ contained in $[0,1]\times[0,1]$ with
piecewise smooth boundary one can show that
\begin{equation*}
\bigg|\I\big(p\,\Omega\big)-p\Area(\Omega)\bigg|
< c(\Omega)\;p^{3/4}\log p,
\end{equation*}
for some constant $c(\Omega)$, which depends only on the region $\Omega$.
For a derivation of this result from Lemma~\ref{Lemma1}, the reader is referred to the papers of 
Laczkovich~\cite{L} and Weyl~\cite{Weyl}. In our context the regions of interest are triangles,
and in such case we can directly derive via a dyadic approximation an estimate as accurate
as the one above. Moreover, we show that $c(\text{triangle})<12$.

\begin{lemma}\label{Lemma2}
Let $p$ be a prime number and let
$\triangle ABC\subset [0,p-1]\times[0,p-1]$ be a right triangle with two sides parallel to the
axes of coordinates. Then
\begin{equation}\label{eqL2}
\bigg|\I\big(\triangle ABC \big)-\frac{\Area(\triangle ABC)}{p} \bigg|
< 3 \;p^{3/4}\log p.
\end{equation}
\end{lemma}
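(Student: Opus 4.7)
The plan is to apply Lemma~\ref{Lemma1} to a dyadic staircase approximation of the triangle. After a reflection if necessary, I may assume the right angle of $\triangle ABC$ sits at some corner $(x_0,y_0)$ with legs of lengths $L_1,L_2\le p-1$ pointing in the positive $x$- and $y$-directions. For a positive integer $k$, I partition the horizontal leg into $2^k$ subintervals of length $L_1/2^k$. Above each subinterval the triangle is a trapezoid; replacing it by the maximal inscribed axis-aligned rectangle produces an inner staircase $I_k$, and by the minimal circumscribing axis-aligned rectangle an outer staircase $O_k$. Each of $I_k$ and $O_k$ is a disjoint union of $2^k$ rectangles contained in $[0,p-1]\times[0,p-1]$, and $I_k \subset \triangle ABC \subset O_k$.

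Applying Lemma~\ref{Lemma1} to each rectangle and summing gives
$$
\bigl|\I(I_k)-\Area(I_k)/p\bigr|\le 2^k\sqrt{p}\,(\log p+1.1)^2,
$$
and the analogous bound for $O_k$. A direct telescoping computation yields the staircase identity $\Area(O_k)-\Area(I_k)=L_1L_2/2^k\le p^2/2^k$; combining this with the two Lemma~\ref{Lemma1} bounds and the inclusions $\Area(I_k)\le\Area(\triangle ABC)\le\Area(O_k)$, $\I(I_k)\le\I(\triangle ABC)\le\I(O_k)$ yields
$$
\bigl|\I(\triangle ABC)-\Area(\triangle ABC)/p\bigr|\le \frac{p}{2^k}+2^k\sqrt{p}\,(\log p+1.1)^2.
$$

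To conclude, I optimize over $k$. The real minimum is attained at $2^k=p^{1/4}/(\log p+1.1)$ and equals $2p^{3/4}(\log p+1.1)$; rounding to the nearest integer loses a factor of at most $3/(2\sqrt2)$, so the bound stays close to $2.12\,p^{3/4}(\log p+1.1)$, which is strictly below $3p^{3/4}\log p$ once $p$ is moderately large. The finitely many small primes that remain are dispatched by a trivial direct count, since the entire triangle contains $O(p^2)$ lattice points while the claimed right-hand side of \eqref{eqL2} already dominates any such discrepancy for small $p$.

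The main technical difficulty is squeezing the implied constant down to $3$ rather than merely $O(1)$: one must trade off the loss from rounding $k$ to an integer against the $+1.1$ buried in the $(\log p+1.1)^2$ factor of Lemma~\ref{Lemma1}, and verify by hand that no small prime slips through the uniform bound.
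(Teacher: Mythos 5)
Your argument is in essence the paper's own: both approximate the triangle from inside and outside by unions of $O(2^k)$ axis-parallel rectangles, apply Lemma~\ref{Lemma1} to each, and balance the uncovered area $O(p^2/2^k)$ against the accumulated error $2^k\sqrt{p}\,(\log p+1.1)^2$ to land at $p^{3/4}\log p$. The only difference is cosmetic: the paper slices the same inscribed staircase into dyadic rows of geometrically shrinking rectangles ($2^{j-1}$ boxes in row $j$), while you use $2^k$ equal-width columns; the box count and the resulting bound are identical. The one genuine slip is your disposal of the small primes: knowing only that the triangle contains $O(p^2)$ lattice points gives a discrepancy of order $p^2$, which is \emph{not} dominated by $3p^{3/4}\log p$. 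What actually saves the small primes is that the set counted by $\I$ lies on the modular hyperbola and so has at most one point per abscissa, whence $\I(\triangle ABC)\le p-1$ and $\Area(\triangle ABC)/p\le p/2$, so the discrepancy is trivially below $p$; and $p<3p^{3/4}\log p$ holds for all $p$ up to several million, comfortably covering the range (roughly $p\lesssim 10^4$, where the optimal $2^k$ drops below $1$) in which your optimization is ineffective. With that repair the proof is complete; note that the paper itself also resorts to a separate verification for small $p$ to secure its constant.
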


\begin{proof}
To get the lower bound, we cover dyadicly $\triangle ABC$ with rectangles $D_j^k$, as in
Figure~\ref{Figure1a}. There are $n$ diagonal rows, the $j$-th row containing $2^{j-1}$ equal
rectangles.
Thus we have
\begin{equation*}
\begin{split}
\I\big(\triangle ABC \big)\ge&
\sum_{\substack{1\le j\le n\\ \phantom{1\le \;}1\le k\le 2^{j-1}}}
\I\big(D_j^k \big).
\end{split}
\end{equation*}
Then we apply Lemma~\ref{Lemma1} for each rectangle $D_j^k$:
\begin{equation}\label{eqAhalf1}
\begin{split}
\I\big(\triangle ABC \big)\ge&
\sum_{\substack{1\le j\le n\\ \phantom{1\le \;}1\le k\le 2^{j-1}}}
\frac{\Area(D_j^k)}{p}-
\big(1+2+\cdots+2^{n-1}\big)\times  \sqrt p\, (\log p +1.1)^2.
\end{split}
\end{equation}
We denote by $T$ the area of $\triangle ABC$ and notice that $\Area\big(D_1^1\big)=T/2$, while
the size of the rectangles in row $j$ is $4$ times smaller than the size of rectangles in row
$j-1$. Then, by relation \eqref{eqAhalf1} it follows:
\begin{equation}\label{eqAhalf2}
\begin{split}
\I\big(\triangle ABC \big)\ge&
\sum_{1\le j\le n}
\frac{T}{2p}\cdot \frac{1}{4^{j-1}}\cdot 2^{j-1}-
\big(2^{n}-1\big)\times \sqrt p\,(\log p+1.1)^2\\
&>\sum_{1\le j\le n}
\frac{T}{p\,2^j}-
\big(2^{n}-1\big)\times \sqrt p\,(\log p+1.1)^2\\
&=\frac Tp -\frac{T}{p\,2^n}
-\big(2^{n}-1\big)\times \sqrt p\,(\log p+1.1)^2\\
&>\frac Tp -\frac{p}{2^{n+1}}
-\big(2^{n}-1\big)\times \sqrt p\,(\log p+1.1)^2\,,
\end{split}
\end{equation}
since $T\le p^2/2$.
We balance the last two terms taking 
$n=\Big[\frac 14\log_2 p -\log_2\big(\sqrt{2}(\log p+1.1)\big) \Big]$.
Thus, by \eqref{eqAhalf2} it follows that there exists $c>0$ and $p_0\ge 2$, such that
\begin{equation}\label{eqAhalf3}
\begin{split}
\I\big(\triangle ABC \big)&
>\frac Tp -c\;p^{3/4}\log p\,,\quad \text{for $p\ge p_0$}.
\end{split}
\end{equation}

For the upper bound, we proceed similarly, covering completely $\triangle ABC$ with an additional
row along the diagonal, the $(n+1)$-th one, containing $2^n$ rectangles. Each of these
new rectangles are equal to those in $n$-th row.
Another way to get the upper bound is to work with the complement covering, which is the difference
between the smallest rectangle that includes $\triangle ABC$ and a series of rectangles like
those used to deduce the lower bound \eqref{eqAhalf3} (see Figure~\ref{Figure1b}).

\begin{figure}[h]
 \centering
 \mbox{
 \subfigure[The inner covering. ]{
 \label{Figure1a}
    \includegraphics*[scale=0.7]{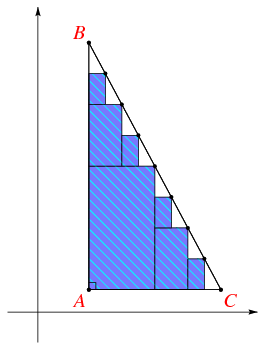}}
 }\quad\quad\quad
 \subfigure[The outer covering, by difference.]{ \label{Figure1b}
    \includegraphics[scale=0.7]{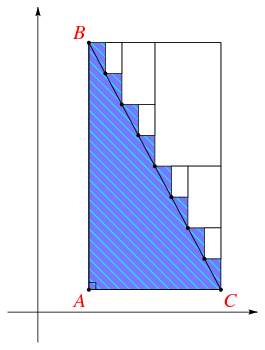}}
  \centering
 \caption{The dyadic approximations of a right triangle using three rows of rectangles.}
 \label{Figure1}
 \end{figure}

We remark that a constant $c$ for which 
the left hand side of \eqref{eqL2} is less than $c\, p^{3/4}\log p$ must be larger than $2\sqrt 2$,
but 
for sufficiently large $p$, it can be chosen as close to $2\sqrt 2=2.828427125...$ as one wishes.
Numerical computations for smaller $p$ show that if $c=2.8320056$ the estimations hold
for all prime numbers $p$.
This completes the proof of the lemma.
\end{proof}

Since any $\triangle ABC\subset [0,p-1)^2$ can be obtained by starting with a rectangle whose edges are
parallel with the axes of coordinates from which at most 3 right triangles with two sides parallel
with the axes of coordinates are cut off, applying Lemma~\ref{Lemma1} and Lemma~\ref{Lemma2}, we
obtain:
\begin{lemma}\label{Lemma3}
Let $p$ be a prime number and let
$\triangle ABC\subset [0,p-1]\times[0,p-1]$. Then
\begin{equation}\label{eqL3}
\bigg|\I\big(\triangle ABC \big)-\frac{\Area(\triangle ABC)}{p} \bigg|
< 12\;p^{3/4}\log p.
\end{equation}
\end{lemma}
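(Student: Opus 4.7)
The strategy is exactly the one foreshadowed in the paragraph immediately preceding the lemma: bound an arbitrary triangle by reducing it to a configuration of one axis-aligned rectangle and a few axis-aligned right triangles, then apply Lemma~\ref{Lemma1} and Lemma~\ref{Lemma2} to each piece.

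More precisely, given a triangle $\triangle ABC\subset [0,p-1]^2$, I would first form its axis-aligned bounding rectangle $R$, i.e.\ the smallest rectangle with sides parallel to the coordinate axes containing $\triangle ABC$. The complement $R\setminus \triangle ABC$ is then a disjoint union of at most three right triangles $T_1,T_2,T_3$, each with its two legs parallel to the coordinate axes: there are three such pieces in the generic case, fewer when some vertex of $\triangle ABC$ coincides with a corner of $R$ or when an edge of $\triangle ABC$ is already axis-aligned.

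Using this decomposition and inclusion--exclusion (ignoring boundary contributions for the moment), I would write
\begin{equation*}
\I(\triangle ABC) = \I(R) - \sum_{i=1}^{3}\I(T_i),\qquad \Area(\triangle ABC) = \Area(R) - \sum_{i=1}^{3}\Area(T_i),
\end{equation*}
so that by the triangle inequality
\begin{equation*}
\bigg|\I(\triangle ABC) - \frac{\Area(\triangle ABC)}{p}\bigg|
\le \bigg|\I(R) - \frac{\Area(R)}{p}\bigg| + \sum_{i=1}^{3}\bigg|\I(T_i) - \frac{\Area(T_i)}{p}\bigg|.
\end{equation*}
Applying Lemma~\ref{Lemma1} to $R$ gives an error of at most $\sqrt{p}\,(\log p + 1.1)^2$, and applying Lemma~\ref{Lemma2} to each $T_i$ contributes at most $3\,p^{3/4}\log p$ apiece. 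Summing, the total error is at most $9\,p^{3/4}\log p + \sqrt{p}\,(\log p+1.1)^2$, which is strictly less than $12\,p^{3/4}\log p$ for every prime~$p$ (after checking small primes directly, or noting that $\sqrt{p}\log^2 p = o(p^{3/4}\log p)$).

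The only real subtlety I expect is boundary bookkeeping. The decomposition of $R$ into $\triangle ABC$ and the $T_i$ is a disjoint union only up to shared edges, and the rectangle conventions in Lemma~\ref{Lemma1} are half-open while Lemma~\ref{Lemma2} makes no explicit such choice. However, any shared edge is a single line segment, and since the counted points satisfy $xy\equiv 1\pmod{p}$, each row and each column contains at most one such point; the number of solutions on any line segment inside $[0,p-1]^2$ is thus $O(p^{1/2})$ by a standard bound on points of the modular hyperbola on a line, which is negligible against $p^{3/4}\log p$. I would therefore simply choose the half-open/closed conventions on each piece so that $R$ is partitioned into $\triangle ABC$ and the $T_i$ exactly, and absorb the at most $O(p^{1/2})$ error introduced by boundary reshuffling into the slack between $9$ and $12$ in the final constant.
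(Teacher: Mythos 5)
Your proposal follows exactly the paper's argument: the paper derives Lemma~\ref{Lemma3} by enclosing the triangle in its axis-aligned bounding rectangle, removing at most three right triangles with legs parallel to the axes, and applying Lemma~\ref{Lemma1} and Lemma~\ref{Lemma2} to the pieces, which yields the stated constant since $9\,p^{3/4}\log p + \sqrt{p}\,(\log p+1.1)^2 < 12\,p^{3/4}\log p$. Your additional care about boundary conventions is a reasonable refinement of what the paper leaves implicit.
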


We now  apply Lemma~\ref{Lemma3} to some special triangles. 
Let
\begin{equation}\label{eqBp}
 \B_{-}^\times(p):=
\left\{
	(x,y)\in [1,p-1]^2\cap\NN^2\colon \ 
		\begin{array}{l}  
			1\le x\le (p-3)/2,\ 
			p\le x+2y+1,\ 
			x>y,  \\ \displaystyle
		xy\equiv 1({\rm mod~}p)
		\end{array}
\right\}
\end{equation}
and 
\begin{equation}\label{eqBm}
 \B_{+}^\times(p):=
\left\{
	(x,y)\in [1,p-1]^2\cap\NN^2\colon \ 
		\begin{array}{l}  
			1\le x\le (p-3)/2,\ 
			p\le x+y,\ 
			y\le 2x,  \\ \displaystyle
		xy\equiv 1({\rm mod~}p)
		\end{array}
\right\}\,.
\end{equation}
For the $52$-nd prime, $p=239$, in Figure~\ref{Figure2a} we have pictured the sets
\begin{equation*}
   \begin{split}
      B_{+}(239) = \big\{&(90, 162),(99, 169),(102, 157),(103, 181),(105, 173),(107, 172),\\
&(108, 135), (109, 182), (110, 176), (112, 207), (117, 143)\big\}
   \end{split}
\end{equation*}
and
\begin{equation*}
B_{-}(239)=  \big\{(94, 89),(95, 78),(100, 98),(101, 71),(114, 65),(115, 106),(116, 68),(118, 79)\big\}.
\end{equation*}

\begin{figure}[h]
 \centering
 \mbox{
 \subfigure[$\B_{+}^\times(239)$ and $\B_{-}^\times(239)$ with $11$ and $8$ points,
respectively. ]{ \label{Figure2a}
    \includegraphics[width=0.44\textwidth]{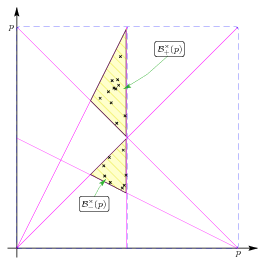}}%
 }\quad\quad
 \subfigure[$\B_{+}^\times(541)$ and $\B_{-}^\times(541)$
and some catching rectangles for their extreme elements.]{ \label{Figure2b}
    \includegraphics[width=0.44\textwidth]{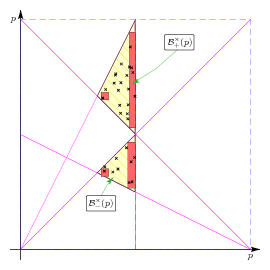}}%
  \centering
 \caption{The triangles $\B_{+}^\times(p)$ and $\B_{-}^\times(p)$.}
 \label{Figure2}
 \end{figure}

\medskip



The sets defined by \eqref{eqBp} and \eqref{eqBm} are two disjoint triangles\footnote{
When it is clear from the context, we use the same notations $\B_{+}^{\times}(p)$ and
$\B_{+}^{\times}(p)$ not only for the lattice points, but for the triangles defined by the
inequalities on the right-hand side of \eqref{eqBp} and \eqref{eqBm}, respectively.
 }, 
and we denote their
union by $\B^\times(p):=\B_{-}^\times(p)\cup\B_{+}^\times(p)$.
Then $\B(p),\;\B_{-}(p),\; \B_{+}(p)$ are the projection onto $Ox$ of $\B^\times(p),\;\B_{-}^\times(p)$, and $\B_{+}^\times(p)$, respectively.
Notice that by projection no point is lost, as they have distinct $x$-coordinates. (This follows
since each nonzero residue class modulo $p$ has exactly one inverse modulo $p$.)

Despite some irregularities for small primes, it turns out that the number of elements in
$\B_{-}(p)$ and $\B_{+}(p)$ are approximately equal to the area of
$\B^\times(p)$ and $\B_{-}^\times(p)$, respectively, and this follows
immediately by Lemma~\ref{Lemma3}.

\medskip
Let us see how far one has to go from the left or from the right side of $\B(p)$ to find
points in $\B_{-}(p)$ or $\B_{+}(p)$.
We denote 
\begin{align}
     &m_{-}(p):=\min_{x\in\B_{-}(p)}x\,,\ \
m_{+}(p):=\min_{x\in\B_{+}(p)}x\,,\
 m_{\pm}(p):=\min_{x\in\B_{-}(p)\cup\B_{+}(p)}x\,,\label{boundsm_m}\\
\intertext{and}
  &   M_{-}(p):=\max_{x\in\B_{-}(p)}x\,,\ \
M_{+}(p):=\max_{x\in\B_{+}(p)}x\,.
\text{ and }\ M_{\pm}(p):=\max_{x\in\B_{-}(p)\cup\B_{+}(p)}x\,.\label{boundsm_M}
\end{align}
In case any of the sets involved is empty we set the corresponding quantity to be
$p/2$ if a `max' is involved, and $p/3$ is a `min' is involved. Thus if $\B_{-}(p)$ is
empty, then $m_{-}(p)=p/3$, for example.

The known methods to study the distribution of inverses ultimately reduce to
showing the existence of small boxes $\R=\I\times\J\subset [1,p]^2$ that capture points 
$(x, \overline{x})$.
In our case the problems are not the same at both ends. This is due to the vertical
edges that exist only on the right-hand side of $\B_{-}^\times(p)$ and $\B_{+}^\times(p)$ 
(see Figure~\ref{Figure2}). 
What we are looking for is a slim box $\R=\I\times\J\subset\B(p)$ that contains
elements of $\B_{-}^\times(p)$ or $\B_{+}^\times(p)$, has the edge $\I$ as small as possible (which
by Lemma~\ref{Lemma1} means that the length of the other edge  $\J$ is forced to be as large as
possible), and is situated as
close as possible to the left or to the right of $\B(p)$, respectively.

Lemma~\ref{Lemma1} shows that the points counted by $\I(\R)$ are rather uniformly spread out
into $[0,p]^2$, therefore it suffices to allow us to find the smallest rectangles $\R\subset
[0,p]^2$ for
which we know for sure that $\I(\R)$ is positive. The condition is:
\begin{equation*}
0 \le  \frac{\Area(\R)}{p} - \sqrt p(\log p +1.1)^2< \I\big(\R\big)\,,
\end{equation*}
which becomes
\begin{equation}\label{eqcond}
  p^{3/2}(\log p +1.1)^2\le \Area(\R)\,.
\end{equation}

Condition \eqref{eqcond} and inclusion in 
$\B_{-}(p)$, $\B_{+}(p)$ or $\B(p)$
are the only two requirements that  our capturing
boxes must fulfill.  These imply sharper estimates for $M_{-}(p)$,  $M_{+}(p)$ and
$M_{\pm}(p)$ than for the corresponding ones on the left-hand side. The reason is that near $x=p/2$
the edges of the triangles from Figure~\ref{Figure2} are long, so we can afford to take $\J$ with 
$|\J|=O(p)$.
On the other hand, for the bound of  $m_{-}(p)$,  $m_{+}(p)$ and
$m_{\pm}(p)$
we can not due better than fit approximately square boxes (rectangles
with edges of the
same order of magnitude), because of the slopes of the edges of the triangles $\B_{-}(p)$,
$\B_{+}(p)$ that meet at  $x=p/3$ (see Figure~\ref{Figure2b}).

The following theorem gives the estimates that follow for the quantities defined in
\eqref{boundsm_m} and \eqref{boundsm_M}. Numerically we found that, for $p < 10^8$,
$m_-(p) \le p/3 + 6 \sqrt{p}$, $m_+(p) \le p/3 + 4\sqrt{p}$ and that
for $p < 10^{10}$, $p/2 - 4 \log p \le M_-(p)$, $p/2 - 2 \log p \le M_+(p)$. 

\begin{theorem}\label{Theorem2}
For $p\ge 2$ we have:
\begin{align}
     \frac{p}{3}\le &m_{-}(p)\le \frac{p}{3}+4.25\, p^{3/4}\log p \\
     \frac{p}{3}\le &m_{+}(p)\le \frac{p}{3}+3\, p^{3/4}\log p\\
     \frac{p}{3}\le &m_{\pm}(p)\le \frac{p}{3}+3\, p^{3/4}\log p \label{eqmpm}\\
\intertext{and}
     \frac{p}{2}-3\, p^{1/2}\log^2 p\le &M_{+}(p)\le \frac{p}{2}\label{eqqui}\\
     \frac{p}{2}-6\, p^{1/2}\log^2 p\le &M_{-}(p)\le \frac{p}{2}\label{eqMpM}
\end{align}
Moreover, for $p\ge 11$:
\begin{align*}
\text{ if } &p\equiv 1({\rm mod~}3), \text{ then } M_{+}(p)=(p-3)/2\,,\\ 
\text{ if } &p\equiv 2({\rm mod~}3), \text{ then } M_{-}(p)=(p-3)/2\,,\\
\intertext{and}
& M_{\pm}(p)=\max\big\{M_{-}(p),\,M_{+}(p)\big\}=(p-3)/2\,. 
\end{align*}
\end{theorem}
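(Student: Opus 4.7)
\noindent\textbf{Plan for the proof of Theorem~\ref{Theorem2}.}
The four inequalities $p/3\le m_{\pm}(p)$ and $M_{\pm}(p)\le p/2$ will come for free from unpacking the defining inequalities of $\B_{\pm}^{\times}(p)$. In $\B_{+}^{\times}(p)$ the conditions $p\le x+y$ and $y\le 2x$ force $p\le 3x$; in $\B_{-}^{\times}(p)$ the conditions $p\le x+2y+1$ and $y<x$ force $p\le 3x+1$. In both cases $x\le (p-3)/2$ is imposed. Since by the last sentence before \eqref{boundsm_m} the projection onto $Ox$ is bijective, the claimed crude bounds follow, and in the empty-set convention they hold trivially. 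The exact identities $M_{+}(p)=(p-3)/2$ for $p\equiv 1\pmod 3$, $M_{-}(p)=(p-3)/2$ for $p\equiv 2\pmod 3$, and consequently $M_{\pm}(p)=(p-3)/2$ for $p\ge 11$, are nothing else than the witness $\beta=(p-3)/2$ already exhibited in the proof of Lemma~\ref{uno}, combined with the upper bound $M_{\pm}(p)\le (p-3)/2$ just noted.

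The genuine content is the upper bound on $m_{\pm}$ and the lower bound on $M_{\pm}$. In both cases the strategy is the standard capturing-box method: by Lemma~\ref{Lemma1} any axis-parallel rectangle $\R\subset [0,p]^{2}$ with $\Area(\R)\ge p^{3/2}(\log p+1.1)^{2}$ contains at least one pair $(x,\overline{x})$ with $x\overline{x}\equiv 1\pmod p$. It therefore suffices, for each of the six bounds, to exhibit a rectangle lying inside the appropriate triangle, of area at least this threshold, whose extreme $x$-coordinate realises the claim.

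\emph{Lower bounds on $M_{\pm}(p)$.} The triangles $\B_{\pm}^{\times}(p)$ have a vertical right edge along $x=(p-3)/2$, where the admissible $y$-interval has length of order $p$. I will use a thin rectangle $\R=[(p-3)/2-s,(p-3)/2]\times[c,d]$ glued to this edge. For $\B_{+}^{\times}(p)$ the binding constraints $y\ge p-x$ (at the left side) and $y\le 2x$ (at the left side) allow a $y$-interval of length $(p-9)/2-3s$, giving $\Area(\R)\ge s\big((p-9)/2-3s\big)$. Imposing the threshold yields $s=O(\sqrt p\log^{2} p)$ and hence \eqref{eqqui}. For $\B_{-}^{\times}(p)$ the slope constraint $y<x$ together with $2y\ge p-x-1$ cuts the available $y$-interval down to roughly $(p-7)/4$, which doubles the required $s$ and produces the factor $6$ in \eqref{eqMpM}.

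\emph{Upper bounds on $m_{\pm}(p)$.} The triangles degenerate to a point near $x\approx p/3$, so one cannot take a thin rectangle; the box must be roughly square. For $\B_{+}^{\times}(p)$, set the box to be $[p/3+t,p/3+t+s]\times [2p/3-t,2p/3+2t]$; inspecting the two lines $y=p-x$ and $y=2x$ at the leftmost $x=p/3+t$ shows the box lies in the triangle, with area $3st$. The threshold $3st\ge p^{3/2}(\log p+1.1)^{2}$ combined with the AM-GM bound $s+t\ge 2\sqrt{st}$ gives $m_{+}(p)\le p/3+(s+t)\le p/3+\tfrac{2}{\sqrt 3}p^{3/4}(\log p+1.1)$, which is comfortably below $p/3+3p^{3/4}\log p$. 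The analysis for $\B_{-}^{\times}(p)$ is the same except that the constraints $2y\ge p-x-1$ and $y<x$ produce at $x=(p-1)/3+t$ a $y$-slab of width only $3t/2$; the same AM-GM step then gives the factor $\sqrt 2$ loss visible in the constant $4.25$. Finally $m_{\pm}(p)\le m_{+}(p)$ delivers \eqref{eqmpm}.

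\emph{Main obstacle.} The conceptual steps are routine once Lemma~\ref{Lemma1} is in hand; the only delicate work is to verify that the candidate rectangles really sit inside $\B_{\pm}^{\times}(p)$ (the binding constraints must be checked at the correct corner, since the triangles have different orientations), and to absorb floor/ceiling effects and the $(\log p+1.1)$ vs.\ $\log p$ discrepancy inside the stated numerical constants $3$, $4.25$, $6$. A small additional care is needed for small $p$ where either $\B_{\pm}(p)$ is empty or the geometric argument would place the box outside $[0,p]^{2}$; there the conventional values in \eqref{boundsm_m}--\eqref{boundsm_M} and a direct numerical check cover the remaining cases.
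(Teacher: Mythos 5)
Your proposal is correct and follows essentially the same route as the paper: the trivial bounds come from the defining inequalities of the triangles, the exact values of $M_{\pm}$ from the witness $\beta=(p-3)/2$ of Lemma~\ref{uno}, and the nontrivial bounds from capturing rectangles satisfying the area threshold \eqref{eqcond} of Lemma~\ref{Lemma1} --- thin boxes glued to the vertical right edge for $M_{\pm}$ and near-square boxes (optimized by the same balancing you phrase as AM--GM) at the left vertex for $m_{\pm}$, with the factors $\sqrt 2$ and $2$ arising from the halved slab widths in $\B_{-}^{\times}(p)$ exactly as in the paper. The paper likewise defers the small-$p$ range of \eqref{eqqui} and \eqref{eqMpM} to direct computation.
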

\begin{proof} When dealing with any of the six quantities, we may assume that the associated $\B$ set is non-empty, for
if it is empty the inequality to be proved trivially holds true.\\
\indent First we find the upper bound of $m_{+}(p)$.
Let $\R\subset \B_{+}(p)$ be the capturing box from the left side of in Figure~\ref{Figure2b}. We
denote
its height by $H$, and assume it to be the largest possible. Also, let $L$ be the length of the
horizontal edge of $\R$, and let $l$ be the distance from the left edge of $\R$ to
$x=p/3$. 
By the similarity of triangles, it follows that 
$H/(p/2)=l/(p/6)$, that is, $l=H/3$.
Denoting $\alpha:=H/L$, this can be written as
\begin{equation}\label{eqlaL1}
     l=\frac{\alpha}{3}L\,.
\end{equation}

Putting $b(p):=p^{3/2}(\log p +1.1)^2$, the inequality \eqref{eqcond} becomes
\begin{equation}\label{eqlaL2}
    b(p)\le \alpha L^2\,.
\end{equation}

Let us remark that if $\R$ is a box that contains points from $\B_{+}^{\times}(p)$, then 
$m_{+}(p)\le p/3+ l+L$. Then, because we need the best available bound, using \eqref{eqlaL1} and
\eqref{eqlaL2}, we get:
\begin{equation*}   
m_{+}(p)-\frac p3\le\min_{b(p)\le \alpha L^2}(l+L)
\le \min_{\frac{b(p)}{L}\le \alpha L}\left(\frac{\alpha L}{3}+L\right)
=\min_{L}\left(\frac{b(p)}{3L}+L\right),
\end{equation*}
where we have made the choice $\alpha=b(p)L^{-2}$, that is $HL=b(p)$.
We balance the terms here, taking $L=\sqrt{b(p)/3}$. These yield
\begin{equation*}
     \begin{split}
          m_{+}(p)-\frac p3 \le\frac{2\sqrt{b(p)}}{\sqrt{3}}\le c_{+}p^{3/4}\log p\,,
     \end{split}
\end{equation*}
for some positive constant $c_{+}$. For sufficiently large $p$, we can take $c_{+}$ close to
$2/\sqrt 3$, while  $c_{+}=3$ covers the inequality for all $p\ge 2$.

The bound for $m_{-}(p)$ is obtained in a similar way. In this case 
$H/(p/4)=l/(p/6)$, and equality \eqref{eqlaL1} has to be replaced by $l=2\alpha L/3$. Then, the same reasoning (with 
$b(p)$ replaced by $2b(p)$) gives
\begin{equation*}
     \begin{split}
          m_{-}(p)-\frac p3 \le\frac{2\sqrt{2b(p)}}{\sqrt{3}}\le c_{-}p^{3/4}\log p\,,
     \end{split}
\end{equation*}
where  $c_{-}=\sqrt 2\, c_{+}$. To cover the bound for all $p\ge 2$, it suffices to take 
$c_{-}=4.25$.

On noticing that
\begin{equation*}
	m_{\pm}(p)=\min\big\{m_{-}(p),\,m_{+}(p)\big\}\,,    
\end{equation*}
the estimate \eqref{eqmpm} follows.

Now we focus on the other side of the triangle and consider  a rectangle $\R\subset \B_{+}^{\times}(p)$ with one edge
glued on the right edge of $\B_{+}^{\times}(p)$. The length of the horizontal edge of $\R$ is $L$
and the length of the vertical one is $H$. As before, we assume that $H$ is as large as possible.

Then, by the similarity of triangles, it follows that 
$H/(p/2)=(p/6-L)/(p/6)$, that is, $H=(p-6L)/2$.
Then, the inequality \eqref{eqcond} becomes
\begin{equation}\label{eqlaL22}
    2\,b(p)\le PL- 6L^2\,.
\end{equation}
We need to find the smallest $L$ for which \eqref{eqlaL22} is satisfied, since
\begin{equation*}
    \frac{p}{2}-M_{+}\le \min_{2b(p)\le pL-6L^2}L\,.
\end{equation*}
Such an $L$ gives rise to the estimate
\begin{equation*}
 \frac{p}{2} -  M_{+}\le C_{+}\sqrt{p}\,\log^2 p\,,
\end{equation*}
for some $C_{+}>2,$ but it can be chosen infinitely close to $2$ for all $p>p_{C_+}$. 

The analogous estimate for $M_-$ is obtained similarly in the other triangle $\B_{-}^{\times}$,
and we get
\begin{equation*}
 \frac{p}{2} -   M_{-}\le C_{-}\sqrt{p}\,\log^2 p\,, \quad\text{ for $p\ge p_{C_{-}}$\,. }
\end{equation*}
Moreover, we get $C_{-}=2\,C_{+}$ and $p_{C_{-}}=2\,p_{C_{+}}$.

For instance, we may take get $C_{+}=3$ and $p_{C_{+}}=8.6\times 10^8$, but one may 
establish variants of these estimates, tightening up or down both the constants and/or the
domain on which they are fulfilled.

By direct computation one then checks that the inequalities (\ref{eqqui}) and (\ref{eqMpM}) are satisfied for
every prime $2\le p\le 8.6\time 10^8$.

The remaining part of the result follows from the proof of Lemma~\ref{uno}.\end{proof}

We remark that the exponents $3/4$ and $1/2$ are essentially the smallest that can 
derived by this method.
How much further can they be decreased? Let $\R=\I\times\J\subset [1,p]^2$ be a rectangle. 
Arguing probabilistically, if $p$ is large, for any $x\in[1,p-1]$, the probability that
$\overline{x} \in\J$ is $\sim |\J|/p$. Then the probability that there exists a point with
coordinates $(x,\overline{x})\in\R$ should be $\sim\Area(\R)/p$. This leads us to conjecture that 
\begin{conjecture}\label{Conjecture1} Let $\epsilon>0$. We have
\begin{equation}\label{eqConjecture}
\begin{split}
m&={p\over 3}+O(p^{1/2+\epsilon})\,,\\
M&={p\over 2}+O(p^{\epsilon})\,.\\
\end{split}
\end{equation}
\end{conjecture}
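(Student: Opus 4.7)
The plan is to strengthen the capturing-rectangle strategy from the proof of Theorem~\ref{Theorem2} so that a rectangle of area only $p^{1+\epsilon}$ is guaranteed to contain at least one point $(x,\overline{x})$ with $xy\equiv 1\pmod{p}$; the probabilistic heuristic recalled just before the conjecture predicts exactly this. Granting such a strengthening, the proof of Theorem~\ref{Theorem2} goes through verbatim with condition \eqref{eqcond} replaced by $\Area(\R)\ge p^{1+\epsilon}$: the triangle geometry near $x=p/3$ forces the capturing rectangle to have $H\asymp L$, so $HL\gtrsim p^{1+\epsilon}$ gives $L\lesssim p^{1/2+\epsilon/2}$ and hence the bound on $m$; near $x=p/2$ one can take $H\asymp p$, so $L\gtrsim p^{\epsilon}$ suffices and the bound on $M$ follows.

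The analytic heart of the matter is therefore reducing the threshold area from $p^{3/2}$ to $p^{1+\epsilon}$ in Lemma~\ref{Lemma1}. Weil's bound $|K(a,b;p)|\le 2\sqrt{p}$ is essentially optimal pointwise, so one cannot do better working with a single incomplete Kloosterman sum; the natural route is to replace pointwise control by averaging. In our setting we do not need every rectangle to capture an inverse pair, but only one suitably placed rectangle among a family of horizontal or vertical translates lying inside the triangle. I would first try second- and fourth-moment estimates for incomplete Kloosterman sums over this family, in the spirit of Shparlinski~\cite{Shparlinski} and of the spectral/automorphic methods of Fouvry--Kowalski--Michel; such averages typically save a factor of a small power of $p$ over the pointwise Weil bound, which is what is needed to push the exponent from $3/4$ down to $1/2+\epsilon$ for the $m$ part of the conjecture.

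The main obstacle, in my view, is the $O(p^{\epsilon})$ claim for $M$. This is equivalent to finding a solution of $xy\equiv 1\pmod{p}$ with $x$ in an interval of length $p^{\epsilon}$ close to $p/2$, a question about the distribution of modular inverses in extremely short intervals. Via Burgess-type character sum estimates and their refinements by Karatsuba, Garaev and others, the best unconditional results currently require intervals of length at least roughly $p^{1/4+\epsilon}$. Closing the remaining gap to $p^{\epsilon}$ would amount to essentially optimal equidistribution of inverses in minute intervals and appears to be out of reach of present technology. A realistic intermediate target is therefore the bound on $m$ with the conjectural exponent $1/2+\epsilon$, while the $M$ part should be expected to demand substantially new input.
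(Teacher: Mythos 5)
The statement you are asked to prove is labelled a \emph{conjecture} in the paper, and the paper offers no proof of it: the only support given is the probabilistic heuristic that a rectangle $\R$ should contain a point $(x,\overline{x})$ as soon as $\Area(\R)/p$ is large, together with a short argument showing that the exponent $1/2$ in the $m$-part cannot be lowered (if $p\equiv 1\pmod 3$ and $(x,y)\in\B_{-}^{\times}(p)$ with $x=\frac{p-1}{3}+a$, $y=\frac{p-1}{3}+b$, then $(3a-1)(3b-1)\equiv 9\pmod p$ forces $a\gg\sqrt p$). Your proposal is likewise not a proof. Its two analytic inputs --- lowering the area threshold in Lemma~\ref{Lemma1} from $p^{3/2}(\log p+1.1)^2$ to $p^{1+\epsilon}$ by averaging incomplete Kloosterman sums over translates, and detecting an inverse pair with $x$ in an interval of length $p^{\epsilon}$ near $p/2$ --- are both left unestablished, and you yourself concede that the second is far beyond current technology. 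Reducing the conjecture to these inputs is a correct but essentially contentless reformulation: the geometric reduction to capturing rectangles is already carried out in the proof of Theorem~\ref{Theorem2}, and all of the difficulty lives in the exponential-sum estimates you do not supply. So as a proof of the statement the proposal has a complete gap; as a diagnosis of why the statement is open it is accurate, and consistent with the paper's own remark that the exponents $3/4$ and $1/2$ are the limit of the rectangle method.

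One point of comparison worth recording: for the $m$-part the paper does obtain the conjectured bound, in the stronger form $m=p/3+O(\sqrt p)$, but only for an infinite class of primes (Section~\ref{twee}, leading to Theorem~\ref{Theorem4}), and it does so by an entirely different mechanism from the one you propose. Rather than improving Kloosterman-sum bounds, it constructs explicit points of $\B^{\times}(p)$ near the vertex at $x=p/3$: one selects primes $p$ with $p\equiv -9\pmod q$ for some prime $q\asymp\sqrt p$, writes $p+9=qm$, and reads off an explicit inverse pair $(x,y)$ with both coordinates equal to $p/3+O(\sqrt p)$; the supply of such primes $p$ is guaranteed by the Bombieri--Friedlander--Iwaniec theorem on primes in arithmetic progressions to moduli of size $x^{1/2}$. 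If you want a provable partial result rather than a conditional reduction, that construction --- not sharper averaged bounds for incomplete Kloosterman sums --- is the route the paper takes.
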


We claim that the exponent $1/2$ on the right side of (\ref{eqConjecture})
is best possible. Indeed, assume for instance that $p\equiv 1({\rm mod~}3)$. 
If $(x,y)\in\B_{-}^\times(p)$, write $x=\frac{p-1}3+a$, $y=\frac{p-1}3+b$,
and then from $xy\equiv 1({\rm mod~}p)$ it follows that
$(3a-1)(3b-1)\equiv 9 ({\rm mod~}p)$. We cannot have $(3a-1)(3b-1)=9$, therefore
$|(3a-1)(3b-1)-9|\ge p$, and since $|b| \ll a$, we deduce that $a\gg \sqrt{p}$.

\section{A sharper lower bound for $M(p)$ valid for an infinite set of primes}
\label{twee}
In this section we show that the inequality \eqref{e4} holds for an infinite class of prime numbers
$p$ and we establish Theorem~\ref{driedrie}. For our 
construction to work we need an improvement over the well known 
Bombieri-Vinogradov Theorem. We may arrange the proof so that we work
with a fixed residue class, and in such case a strong improvement over the
Bombieri-Vinogradov Theorem has been provided in a series of papers by
Bombieri, Friedlander and Iwaniec~\cite{BFI1, BFI2, BFI3}.
The Main Theorem from~\cite{BFI3} gives a continuous transition from
Bombieri-Vinogradov type theorems to Brun-Titchmarsh type theorems. 
It states that:
\begin{theorem}[Bombieri-Friedlander-Iwaniec~\cite{BFI3}]\label{ThBFI}
     Let $a \neq 0$ be an integer and $A > O$, $2\le  Q\le x^{3/4}$ be reals.
Let $\C$ be the set of all integers $q$, prime to  $a$, from an interval $Q' < q \le Q$.
Then
\begin{equation}\label{eqThBFI}
     \begin{split}
	\sum_{q\in\C}&\left|\pi (x;q ,a)-\frac{\pi(x)}{\varphi(q)}
\right|\\
&\le
\left\{K\Big(\theta - \frac 12\Big)^2 {x\over L}+
O_{A}\Big({x\over L^3}\big(\log\log x\big)^2\Big)
\right\}
\sum_{q\in\C}\frac{1}{\varphi(q)}
+
O_{a,A}\Big({x\over L^{A}}\Big),
\end{split}
\end{equation}
where $\theta  = \log Q/ \log x$,    $L = \log x$,  $K$ is absolute, and the subscripts of $O$
indicate the dependence on those constants.
\end{theorem}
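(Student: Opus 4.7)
The plan is to follow the dispersion-method strategy of Bombieri, Friedlander, and Iwaniec, which upgrades the classical Bombieri--Vinogradov theorem beyond the square-root barrier by exploiting averaging over a single residue class $a$. The first step is a combinatorial decomposition: I would apply a Heath-Brown or Vaughan identity to $\pi(x;q,a)-\pi(x)/\varphi(q)$, expressing the prime-counting error as a bounded number of bilinear (Type~II) sums $\sum_{m\sim M}\sum_{n\sim N}\alpha_m\beta_n \mathbf{1}_{mn\equiv a(\mathrm{mod}\,q)}$ together with linear (Type~I) sums. The Type~I sums and the sums with $M$ or $N$ very small (or very large) can be handled by the Siegel--Walfisz theorem combined with the hyperbolic trick, so the effort concentrates on the truly bilinear range.

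For the Type~II sums, I would implement Linnik's dispersion method: sum over $q\in\C$ the squared discrepancy, expand the square, and separate the variables using Cauchy--Schwarz so as to dualize into a problem of counting, for fixed $m_1,m_2$, the quantity $\sum_{q\in\C}\sum_{n_1,n_2}\mathbf{1}_{m_1n_1\equiv m_2n_2\equiv a(\mathrm{mod}\,q)}$. Detecting the congruence by complete exponential sums converts the inner sums into incomplete Kloosterman sums $K(a\overline{m_1 m_2},n;q)$ averaged over the moduli $q$. The whole difficulty of the theorem is transferred to bounding these averaged Kloosterman sums beyond Weil's individual bound.

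The hard part, and the heart of the BFI machinery, is then the input from Deshouillers--Iwaniec: their bounds for sums of Kloosterman sums coming from Kuznetsov's trace formula and the spectral theory of automorphic forms on $\mathrm{GL}_2(\mathbf{Q})$ supply precisely the cancellation needed when $q$ varies in a dyadic block and the length of the $n$-sum is moderately large. Putting this into the dispersion estimate gives a gain over the trivial bound which degrades continuously with $\theta=\log Q/\log x$; tracking the degradation quantitatively is what produces the factor $K(\theta-\tfrac12)^2$ in \eqref{eqThBFI}, since at $\theta=\tfrac12$ the large sieve already suffices with an absolute constant, while the spectral input is what keeps the estimate non-trivial as $\theta$ approaches $3/4$.

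The main technical obstacle is the appearance of non-well-factorable moduli $q_1q_2$ and the need to balance the $M,N$ ranges: for some ranges the roles of $m$ and $n$ must be swapped (``reflection'') to bring the relevant sum into the regime where Deshouillers--Iwaniec applies, and the resulting combinatorial bookkeeping — together with verifying that the error term $O_{A}(x/L^{A})$ absorbs the off-diagonal contributions coming from the exceptional Siegel zero via the standard Landau--Page device — is the most delicate part. I would expect the cleanest exposition to mirror the proof in \cite{BFI3} closely, since any shortcut through the spectral step is unlikely to reproduce the sharp exponent $3/4$ on $Q$.
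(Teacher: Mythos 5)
The paper does not prove this statement at all: it is quoted verbatim as the Main Theorem of Bombieri--Friedlander--Iwaniec \cite{BFI3} and used as a black box in Section 3, so there is no internal proof to compare your attempt against. Judged on its own terms, your proposal is a fair roadmap of the actual BFI argument --- the combinatorial decomposition into Type I/Type II sums, Linnik's dispersion method, the reduction to averaged incomplete Kloosterman sums, and the Deshouillers--Iwaniec spectral bounds as the decisive input are indeed the ingredients of the proofs in \cite{BFI1, BFI2, BFI3}.

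However, what you have written is a description of a proof, not a proof. Every quantitatively hard step is deferred: you assert, without any computation, that tracking the dispersion estimate ``produces the factor $K(\theta-\tfrac12)^2$,'' that the secondary term $O_A\bigl(xL^{-3}(\log\log x)^2\bigr)$ emerges, and that the exceptional-zero contribution is absorbed into $O_{a,A}(x/L^A)$; none of these shapes is derivable from the outline as given, and the precise form of the main term (an absolute constant $K$ times $(\theta-\tfrac12)^2$, valid uniformly down to $\theta$ near $\tfrac12$ where it must match Bombieri--Vinogradov) is exactly the delicate content of \cite{BFI3} that distinguishes it from its predecessors. Your closing remark that the exposition should ``mirror the proof in \cite{BFI3} closely'' concedes the point: the attempt reduces to citing the source, which is precisely what the paper under review does. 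There is no error in your outline, but there is also no proof in it; if the task is to supply an independent argument for \eqref{eqThBFI}, the gap is the entire analytic core (the dispersion computation and the spectral Kloosterman-sum estimates), and if the task is to match the paper, the correct answer is simply the citation.
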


Fix a constant $c_3>1$, and two other constants $0<c_4<c_5$. 
Take a large positive real number $X$ and apply the above estimate
with $a=-9$, $A = 3$, $x=X$, $Q = c_5\sqrt{X}$, and $Q' = c_4\sqrt{X}$.
Then $L = \log X$ and
$\theta = \log Q/\log X = 1/2+\log c_5/\log X,$ so
\begin{equation*}
\left(\theta - \frac12\right)^2 =  \frac{\log^2 c_5}{\log^2X}\,.
\end{equation*}
Landau~\cite[p. 113]{Edmund} showed that
\begin{equation*}
\sum_{n\le x}{1\over \varphi(n)}=\alpha \log x + \beta +O\left({\log x\over x}\right),
\end{equation*}
with $\alpha>0$ and $\beta$ constants that can be explicitly given.
This implies
\begin{equation*}
\sum_{\substack{Q'<q<Q\\(q,3)=1}} {1\over \varphi(q)}  = 
O\left(\sum_{Q'<q<Q} {1\over \varphi(q)}\right)  
= O(1).
\end{equation*}
It follows that
\begin{equation}\label{Ye1}
\sum_{\substack{c_4\sqrt{X}<q<c_5\sqrt{X}\\(q,3)=1}}\left|\pi(X;q,-9)-\frac{\pi(X)}{\varphi(q)}
\right|
=O\left(\frac{X(\log\log X)^2}{\log^3 X}\right)\,.
\end{equation}
Applying the estimate a second time, with $a=-9$, $A = 3$, $x=c_3X$, $Q = c_5\sqrt{X}$, and 
$Q' = c_4\sqrt{X}$, we have
\begin{equation}\label{Ye2}
\sum_{\substack{c_4\sqrt{X}<q<c_5\sqrt{X}\\(q,3)=1}}\left|\pi(c_3X;q,-9)-\frac{\pi(c_3X)}{\varphi(q)
} \right|
=O\left(\frac{X(\log\log X)^2}{\log^3 X}\right)\,.
\end{equation}

Next, we restrict the summation over $q$ on the left sides of \eqref{Ye1} and \eqref{Ye2}
to prime numbers congruent to $-1({\rm mod~}3)$, and then combine the two estimates to
obtain
\begin{equation}\label{Ye3}
\sum_{\substack{q \text{ prime}\\
c_4\sqrt{X}<q <c_5\sqrt{X}\\  q\equiv -1({\rm mod~}3)}}
\left|\pi(c_3X;q,-9) - \pi(X;q,-9)-\frac{\pi(c_3X) - \pi(X)}{q-1}\right|
=O\left(\frac{X(\log\log X)^2}{\log^3 X}\right)\,.
\end{equation}

Furthermore, 
\begin{equation}\label{Ye4}
\sum_{\substack{q \text{ prime}\\
c_4\sqrt{X}<q <c_5\sqrt{X}\\  q\equiv -1({\rm mod~}3)}}
\frac{1}{q}\sim \frac{\log c_5 - \log c_4}{\log X} \;,
\end{equation}
and
\begin{equation}\label{Ye5}
\sum_{\substack{q \text{ prime}\\
c_4\sqrt{X}<q <c_5\sqrt{X}\\  q\equiv -1({\rm mod~}3)}}
\frac{\pi(c_3X) - \pi(X)}{q-1}\sim \frac{(c_3-1)(\log c_5 - \log c_4)X}{\log^2 X}\;.
\end{equation}

Combining \eqref{Ye3} and \eqref{Ye5}, we find that
\begin{equation}\label{Ye6}
\sum_{\substack{q \text{ prime}\\
c_4\sqrt{X}<q <c_5\sqrt{X}\\  q\equiv -1({\rm mod~}3)}}
\left(\pi(c_3X;q,-9) - \pi(X;q,-9)\right)\sim \frac{(c_3-1)(\log c_5 - \log c_4)X}{\log^2 X}\;.
\end{equation}

Let us remark that for each prime number $p\le c_3X$, there are at most two
prime numbers $q\in(c_4\sqrt{X}, c_5\sqrt{X})$ for which $p\equiv -9({\rm mod~}q)$,
so each prime $p$ is counted at most twice on the left side of \eqref{Ye6}. 
We deduce that

\begin{equation}\label{Ye67}
\#\left\{p \ :
\begin{array}{l l l}p \text{ prime, }X < p < c_3X,\  \\
p\equiv -9({\rm mod~}q), \; \text{ for some prime } q \text{ with }\\
\qquad c_4\sqrt{X} < q < c_5\sqrt{X} \text{ and } q\equiv -1({\rm mod~}3)     
\end{array}
 \right\}
\ge \frac{c_6X}{\log^2 X}\;,
\end{equation}
for any fixed real number $c_6$, satisfying
\begin{equation}\label{Ye7}
0< c_6 < \frac{(c_3-1)(\log c_5 - \log c_4)}{2}\;,
\end{equation}
and all $X$ large enough.

We now take any prime $p$ from the set on the left side of \eqref{Ye67}, choose
a corresponding $q$, and write $p+9 = qm$.  We distinguish two cases.

{\bf (I)}  $p\equiv -1({\rm mod~}3)$.
In this case we have $m\equiv 1({\rm mod~}3)$. We write $q$ and $m$ in the form
$q=3a-1$, $m=3b+1$. Here $a$ and $b$ are positive integers, and each of them 
lies between two (suitable) constants times $\sqrt{X}$. We put $y = \frac{2p-1}3 + a$,
$x=\frac{p+1}3+b$. Then $x$ and $y$ are integers and satisfy the congruence
$xy\equiv 1({\rm mod~}p)$. The point $(x,y)$ lies (for suitably chosen constants
$c_3, c_4$ and $c_5$) inside the upper yellow triangle in Figure~\ref{Figure2},
close to its left vertex.

{\bf (II)}  $p\equiv 1({\rm mod~}3)$.
In this case $m\equiv -1({\rm mod~}3)$. Write 
$q=3a-1$, $m=3b-1$. As before, $a$ and $b$ are positive integers and each 
lies between two constants times $\sqrt{X}$. We now put $x = \frac{p-1}3 + a$,
$y=\frac{p-1}3+b$. Then $x$ and $y$ are integers satisfying
$xy\equiv 1({\rm mod~}p)$. Moreover, one of the points $(x,y)$ or $(y,x)$
lies (for suitably chosen 
$c_3, c_4$ and $c_5$) inside the lower yellow triangle (shaded triangle in 
a black-white rendition of this article) in Figure~\ref{Figure2},
close to its left vertex.

Putting both cases together, we see that for such prime numbers $p$, the first equality in
Conjecture~\ref{Conjecture1} holds in the stronger form $m= p/3 + O(\sqrt{p})$. This $m$ here 
is the one defined via the union of the two yellow triangles in Figure~\ref{Figure2a}
(and~\ref{Figure2b}), and the implied constant is effectively computable. 

In conclusion, we have proved the following theorem.

\begin{theorem}\label{Theorem4}
For all large $X$, we have
$\# \{p\le X: M(p) >  2p/3 - c_8 \sqrt{p}\}\ge {c_7X}{\log^{-2} X}.$
\end{theorem}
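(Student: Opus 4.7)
The plan is to reduce the bound to the existence of an element of $\B(p)$ close to $p/3$, and then to invoke the Bombieri--Friedlander--Iwaniec theorem to show such elements exist for a positive proportion of primes. By Corollary~\ref{coo} one has $M(p) \ge p - \min\{\B(p)\}$, so it suffices to produce, for each $p$ in a set of cardinality $\gg X/\log^{2} X$, an element $\beta\in\B(p)$ satisfying $\beta \le p/3 + c_{8}\sqrt{p}$.

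First I would set up the algebraic construction. A point of $\B^{\times}(p)$ lying at distance $O(\sqrt{p})$ from the left apex $(p/3, p/3)$ of either triangle corresponds to a factorization $p+9 = qm$ with $q, m \asymp \sqrt{p}$, subject to a sign condition modulo $3$ dictated by the residue of $p$ modulo $3$. Writing $q = 3a-1$ and $m = 3b+\varepsilon$ with $\varepsilon = \pm 1$ chosen by parity, one obtains candidate points $(x,y)$ of the shape $p/3 + a + O(1)$ and $p/3 + b + O(1)$ satisfying $xy\equiv 1\pmod p$. A direct check of the defining inequalities of $\B_{\pm}^{\times}(p)$ from \eqref{eqBp}--\eqref{eqBm} then confirms that, provided $a,b$ are confined between two fixed multiples of $\sqrt{p}$, the point $(x,y)$ (or $(y,x)$) lies strictly inside the appropriate triangle, close to its left vertex.

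Next I would count the primes $p$ admitting such a factorization. Fixing $c_{3}>1$ and $0<c_{4}<c_{5}$, apply Theorem~\ref{ThBFI} twice with $a=-9$, $A=3$, $Q' = c_{4}\sqrt{X}$, $Q = c_{5}\sqrt{X}$, once at $x=X$ and once at $x=c_{3}X$. In this regime $\theta-\tfrac12 = \log c_{5}/\log X$, so $(\theta-\tfrac12)^{2} = O(\log^{-2}X)$; combined with Landau's asymptotic for $\sum 1/\varphi(n)$ this yields
\[
\sum_{\substack{c_{4}\sqrt{X}<q<c_{5}\sqrt{X}\\(q,3)=1}}\left|\pi(cX;q,-9) - \frac{\pi(cX)}{\varphi(q)}\right| = O\!\left(\frac{X(\log\log X)^{2}}{\log^{3} X}\right)
\]
for each $c\in\{1,c_{3}\}$. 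Subtracting the two estimates, restricting to primes $q\equiv -1\pmod 3$, and inserting the Mertens-type asymptotic for $\sum 1/q$ over the range $c_{4}\sqrt{X}<q<c_{5}\sqrt{X}$, one recovers $\gg X/\log^{2}X$ pairs $(p,q)$ with $p\equiv -9\pmod q$; since each prime $p$ contributes at most two such $q$, the number of distinct admissible primes $p$ is still of order $X/\log^{2}X$.

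The main technical obstacle will be calibrating the three constants $c_{3},c_{4},c_{5}$ so that simultaneously (i) the positivity condition analogous to \eqref{Ye7} is satisfied, giving a nontrivial density on the BFI side, and (ii) the resulting ranges of $a,b$ keep $(x,y)$ strictly inside the open triangle $\B_{\pm}^{\times}(p)$ uniformly for every $p\in(X,c_{3}X)$, rather than grazing its boundary. Once the calibration is in place, the split by $p\bmod 3$ produces for each such $p$ a valid $\beta\in\B(p)$ with $\beta\le p/3+c_{8}\sqrt{p}$, and Corollary~\ref{coo} delivers $M(p)\ge 2p/3-c_{8}\sqrt{p}$ on a set of cardinality $\gg X/\log^{2}X$, which is the stated conclusion.
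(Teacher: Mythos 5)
Your proposal follows the paper's proof essentially verbatim: the same double application of Theorem~\ref{ThBFI} with $a=-9$ over moduli $q\asymp\sqrt X$, the same subtraction of the two estimates and restriction to primes $q\equiv-1\pmod 3$, the same factorization $p+9=qm$ with the at-most-two-$q$'s observation, and the same case split on $p\bmod 3$ to place a point of $\B^{\times}(p)$ with $x$-coordinate $p/3+O(\sqrt p)$, followed by Corollary~\ref{coo}. The one slip is your claim that the left apex of \emph{either} triangle is $(p/3,p/3)$ with both coordinates of shape $p/3+O(\sqrt p)$: the left vertex of $\B_{+}^{\times}(p)$ is $(p/3,2p/3)$, and in the case $p\equiv-1\pmod 3$ the congruence $xy\equiv 1\pmod p$ forces $y=(2p-1)/3+a\approx 2p/3$; since only the $x$-coordinate enters the bound on $\min\B(p)$, this does not affect the argument, and your promised direct check of the inequalities in \eqref{eqBp}--\eqref{eqBm} would surface it.
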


Finally, we will establish Theorem~\ref{driedrie} stated in the introduction.\\

{\it Proof of Theorem}~\ref{driedrie}. 
The estimate~\eqref{e3} is a consequence of Theorem~\ref{Theorem2} and 
the inequality $M(p)\ge p-m_{\pm}(p)$, which follows by Corollary~\ref{coo}. 
Part 2 is a corollary of Theorem~\ref{Theorem4}. \qed




\end{document}